\newtheorem{theo}{Theorem}
\def\qedsymbol{\ensuremath{\scriptstyle\blacksquare}}
\newcommand{\Prob}{\mathbb{P}}
\newcommand{\Ex}{\mathbb{E}}
\newcommand{\Var}{\mathbb{V}}
\newcommand{\SSet}{\mathcal{S}}
\newcommand{\bst}{\operatorname{bst}}
\newcommand{\sym}{\operatorname{sym}}
\DeclareMathOperator{\Aut}{\mathrm{Aut}}
\DeclarePairedDelimiter{\abs}{\lvert}{\rvert}
\begin{document}
\title{On the Collection of Fringe Subtrees in Random Binary Trees}

\author{Louisa Seelbach Benkner\inst{1}\thanks{This project has received funding from the European Union’s Horizon 2020 research and innovation programme under the Marie Sk\l odowska-Curie grant agreement No 731143 and the DFG research project LO 748/10-1 (QUANT-KOMP).} \and
Stephan Wagner\inst{2,3} }
\institute{Department f\"ur Elektrotechnik und Informatik, Universit\"at Siegen, H\"olderlinstrasse~3, 57076 Siegen, Germany \\
\email{seelbach@eti.uni-siegen.de}
\and 
Department of Mathematical Sciences, Stellenbosch University, Private Bag X1, Matieland 7602, South Africa \\
\email{swagner@sun.ac.za}
\and
Department of Mathematics, Uppsala Universitet, Box 480, 751 06 Uppsala, Sweden \\
\email{stephan.wagner@math.uu.se}
}
\maketitle              
\begin{abstract}
A fringe subtree of a rooted tree is a subtree consisting of one of the nodes and all its descendants. In this paper, we are specifically interested in the number of non-isomorphic trees that appear in the collection of all fringe subtrees of a binary tree. This number is analysed under two different random models: uniformly random binary trees and random binary search trees.
 
In the case of uniformly random binary trees, we show that the number of non-isomorphic fringe subtrees lies between $c_1n/\sqrt{\ln n}(1+o(1))$ and $c_2n/\sqrt{\ln n}(1+o(1))$ for two constants $c_1 \approx 1.0591261434$ and $c_2 \approx 1.0761505454$, both in expectation and with high probability, where $n$ denotes the size (number of leaves) of the uniformly random binary tree. A similar result is proven for random binary search trees, but the order of magnitude is $n/\ln n$ in this case.
 
Our proof technique can also be used to strengthen known results on the number of distinct fringe subtrees (distinct in the sense of ordered trees). This quantity is of the same order of magnitude in both cases, but with slightly different constants in the upper and lower bounds.

\keywords{Uniformly Random Binary Trees  \and Random Binary Search Trees \and Fringe Subtrees \and Tree Compression}
\end{abstract}

\section{Introduction}

A subtree of a rooted tree that consists of a node and all its descendants is called a \emph{fringe subtree}. Fringe subtrees are a natural object of study in the context of random trees, and there are numerous results for various random tree models, see e.g. \cite{aldous91, dennertgr10, devroye14, FengM10}.

Fringe subtrees are of particular interest in computer science: One of the most important and widely used lossless compression methods for rooted trees is to represent a tree as a directed acyclic graph, which is obtained by merging nodes that are roots of identical fringe subtrees. This compressed representation of the tree is often shortly referred to as \emph{minimal DAG} and its size (number of nodes) is the number of distinct fringe subtrees occurring in the tree. Compression by minimal DAGs has found numerous applications in various areas of computer science, as for example in compiler construction \cite[Chapter~6.1 and 8.5]{AhoSU86}, unification \cite{PatersonW78}, symbolic model checking (binary decision diagrams) \cite{Bry92}, information theory \cite{GanardiHLS19, ZhangYK14} and XML compression and querying \cite{BuGrKo03,FrGrKo03}. 

In this work, we investigate the number of fringe subtrees in random binary trees, i.e. random trees such that each node has either exactly two or no children. 
So far, this problem has mainly been studied with respect to ordered fringe subtrees in random ordered binary trees: A \emph{uniformly random ordered binary tree} of size $n$ (with $n$ leaves) is a random tree whose probability distribution is the uniform probability distribution on the set of ordered binary trees of size~$n$. In \cite{FlajoletSS90}, Flajolet, Sipala and Steyaert proved that the expected number of distinct ordered fringe subtrees in a uniformly random ordered binary tree of size $n$ is asymptotically equal to $c \cdot n/\sqrt{\ln n}$, where $c$ is the constant $2\sqrt{\ln 4/\pi}$. 
This result of Flajolet et al. was extended to unranked labelled trees in \cite{MLMN13} (for a different constant $c$). Moreover, an alternative proof to the result of Flajolet et al. was presented in \cite{RalaivaosaonaW15} in the context of simply-generated families of trees. 

Another important type of random trees are so-called \emph{random binary search trees}:
A random binary search tree of size $n$ is a binary search tree built by inserting the keys $\{1, \dots, n\}$ according to a uniformly chosen random permutation on $\{1, \dots, n\}$. Random binary search trees naturally arise in theoretical computer science, see e.g. \cite{Drmota09}. In \cite{FlajoletGM97}, Flajolet, Gourdon and Martinez proved that the expected number of distinct ordered fringe subtrees in a random binary search tree of size $n$ is $O(n/\ln n)$. This result was improved in \cite{Devroye98} by Devroye, who showed that the asymptotics $\Theta(n/\ln n)$ holds.
Moreover, the result of Devroye was generalized from random binary search trees to a broader class of random ordered binary trees in \cite{SeelbachLo18}, where the problem of estimating the expected number of distinct ordered fringe subtrees in random binary trees was considered in the context of so-called leaf-centric binary tree sources, which were introduced in \cite{KiefferYS09, ZhangYK14}  as a general framework for modeling probability distributions on the set of ordered binary trees of size $n$.

In this work, we focus on estimating the number of \emph{non-isomorphic} fringe subtrees in random ordered binary trees, where we call two binary trees non-isomorphic if they are distinct as unordered binary trees. This question arises quite naturally for example in the context of XML compression: Here, one distinguishes between so-called document-centric XML, for which the corresponding XML document trees are ordered, and data-centric XML, for which the corresponding XML document trees are unordered. Understanding the interplay between ordered and unordered structures has thus received considerable attention in the context of XML (see, for example, \cite{AbiteboulBV15, BonevaCS15, ZhangDW15}). In particular, in \cite{LohreyMR17}, it was investigated whether tree compression can benefit from unorderedness. For this reason, so-called \emph{unordered minimal DAGs} were considered. An unordered minimal DAG of a binary tree is a directed acyclic graph obtained by merging nodes that are roots of isomorphic fringe subtrees, i.e. of fringe subtrees which are identical as unordered trees. From such an unordered minimal DAG, an unordered representation of the original tree can be uniquely retrieved. The size of this compressed representation is the number of non-isomorphic fringe subtrees occurring in the tree. So far, only some worst-case estimates comparing the size of a minimal DAG to the size of its corresponding unordered minimal DAG are known: Among other things, it was shown in \cite{LohreyMR17} that the size of an unordered minimal DAG of a binary tree can be exponentially smaller than the size of the corresponding (ordered) minimal DAG. 

However, no average-case estimates comparing the size of the minimal DAG of a binary tree to the size of the corresponding unordered minimal DAG are known so far. In particular, in \cite{LohreyMR17} it is stated as an open problem to estimate the expected number of non-isomorphic fringe subtrees in a uniformly random ordered binary tree of size $n$ and conjectured that this number asymptotically grows as $\Theta(n/\sqrt{\ln n})$.
 
In this work, as one of our main theorems, we settle this open conjecture by proving upper and lower bounds of order $n/\sqrt{\ln n}$ for the number of non-isomorphic fringe subtrees which hold both in expectation and with high probability (i.e., with probability tending to $1$ as $n \to \infty$). Our approach can also be used to obtain an analogous result for random binary search trees, though the order of magnitude changes to $\Theta(n/\ln n)$. Again, we have upper and lower bounds in expectation and with high probability. Our two main theorems read as follows.

\begin{theo}\label{thm:unordereduniform}
Let $F_n$ be the total number of non-isomorphic fringe subtrees in a uniformly random ordered binary tree with $n$ leaves. For two constants $c_1 \approx 1.0591261434$ and $c_2 \approx 1.0761505454$, the following holds:
\begin{enumerate}
\item[(i)] $\displaystyle c_1 \frac{n}{\sqrt{\ln n}} (1+o(1)) \leq \Ex(F_n) \leq c_2 \frac{n}{\sqrt{\ln n}} (1+o(1))$,
\item[(ii)] $\displaystyle c_1 \frac{n}{\sqrt{\ln n}} (1+o(1)) \leq F_n \leq c_2 \frac{n}{\sqrt{\ln n}} (1+o(1))$ with high probability.
\end{enumerate}

\end{theo}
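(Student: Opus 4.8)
The plan is to split the multiset of fringe subtrees of the random tree $T_n$ by size at a cutoff $k=\Theta(\ln n)$ and to analyse the isomorphism types of the small and of the large fringe subtrees separately. Let $U_j$ be the number of non-isomorphic binary trees with $j$ leaves (the Wedderburn--Etherington numbers) and $U_{\le k}=\sum_{j\le k}U_j$. Singularity analysis of the generating function $B(z)=z+\tfrac12\big(B(z)^2+B(z^2)\big)$, which has a square-root singularity at its radius of convergence $\rho\approx 0.4026975$, gives $U_j\sim b\,\rho^{-j}j^{-3/2}$ and hence $U_{\le k}\sim\tfrac{b}{1-\rho}\,\rho^{-k}k^{-3/2}$. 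Writing $X_{>k}=\sum_v\mathbf{1}[\card{T_v}>k]$ for the number of nodes whose fringe subtree has more than $k$ leaves, the usual substitution argument together with the Catalan asymptotics yields
\[ \Ex(X_{>k})=\sum_{j>k}\frac{(n-j+1)\,C_{n-j}\,C_{j-1}}{C_{n-1}}\sim\frac{2n}{\sqrt{\pi k}} \]
uniformly for $k=\Theta(\ln n)$.

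\textbf{Upper bound.} The fringe subtrees of size at most $k$ contribute at most $U_{\le k}$ distinct types, and each larger one is rooted at one of the $X_{>k}$ nodes, so $F_n\le U_{\le k}+X_{>k}$ holds deterministically. Choosing $k$ just below $(\ln n+\ln\ln n)/\ln(1/\rho)$ makes $U_{\le k}=o\big(n/\sqrt{\ln n}\big)$ while $\Ex(X_{>k})=\tfrac{2\sqrt{\ln(1/\rho)}}{\sqrt\pi}\cdot\tfrac{n}{\sqrt{\ln n}}(1+o(1))$, which proves~(i) with $c_2=\tfrac{2\sqrt{\ln(1/\rho)}}{\sqrt\pi}\approx 1.07615$. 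For~(ii) I would write $X_{>k}=(2n-1)-X_{\le k}$ where $X_{\le k}=\sum_v\mathbf{1}[\card{T_v}\le k]$ is an additive tree functional whose toll is bounded and supported on trees of size at most $k$; a recursive variance estimate (or an Efron--Stein bound) gives $\Var(X_{\le k})=O\big(n\,\mathrm{poly}(\ln n)\big)=o\big(\Ex(X_{>k})^2\big)$, so $X_{>k}=\Ex(X_{>k})(1+o(1))$ with high probability by Chebyshev, while $U_{\le k}$ is deterministic.

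\textbf{Lower bound.} Here one must show that almost every small isomorphism type is actually realised. For a fixed non-isomorphic binary tree $\tau$ with $j$ leaves write $\mathrm{ord}(\tau)=2^{j-1}/\card{\Aut(\tau)}$ for its number of ordered versions; then the expected number of fringe subtrees of $T_n$ isomorphic to $\tau$ is $\mu_\tau=\mathrm{ord}(\tau)\cdot\tfrac{(n-j+1)C_{n-j}}{C_{n-1}}\sim\tfrac{n}{\card{\Aut(\tau)}}\,2^{-(j-1)}$. The key step is a concentration estimate: if $\mu_\tau$ is large then $\tau$ occurs as a fringe subtree with probability $1-o(1)$, in fact with a probability close enough to $1$ to survive a union bound over the relevant types. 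To prove it I would condition on the ``core'' of $T_n$ consisting of the nodes $v$ with $\card{T_v}>s$ for a suitable $s$: given the core and the sizes of the maximal fringe subtrees hanging off it, those subtrees are \emph{independent} uniformly random binary trees, there are $\Theta(n/\sqrt s)$ of them, a constant fraction have any prescribed size in $[1,s]$, and inside a uniform binary tree of size $m\ge j$ the chance of containing a copy of $\tau$ is bounded below in terms of $\mathrm{ord}(\tau)/C_{j-1}$; multiplying over these independent pieces yields an exponentially small failure probability. One then discards the exceptional $\tau$ whose automorphism group is far larger than typical --- a random binary tree of size $j$ has $\card{\Aut(\tau)}$ of exponential order $\asymp(2\rho)^{-j}$, as follows from $U_j\sim b\rho^{-j}j^{-3/2}$ together with $\sum_{\card{\tau}=j}\card{\Aut(\tau)}^{-1}=C_{j-1}2^{1-j}$, so the very symmetric types are rare --- and sums $\Prob[\tau\text{ occurs}]\ge 1-o(1)$ over the remaining $\tau$ with $j\le k_1$, obtaining $\Ex(F_n)\ge(1-o(1))U_{\le k_1}$ and, through the union bound, the same bound with high probability. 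Since $\mu_\tau$ must stay large throughout --- and $\mu_\tau\asymp n\rho^{j}$ for a typical $\tau$ of size $j$, so $\mu_\tau\to\infty$ only when $j<\ln n/\ln(1/\rho)$ --- the admissible cutoff $k_1$ is forced slightly below the one used in the upper bound, which yields the smaller constant $c_1\approx 1.05913$.

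\textbf{Main obstacle.} The asymptotics of $U_{\le k}$ and of $\Ex(X_{>k})$ are routine; the real work, and the reason for the gap $c_1<c_2$, lies in the lower bound for $\tau$ of size near $\ln n/\ln(1/\rho)$, where $\mu_\tau$ is only of polynomial order in $n$, the distribution of $\log\card{\Aut(\tau)}$ over random types of that size is spread out, and one needs a realisation probability strong enough for a union bound over $\Theta(n/\sqrt{\ln n})$ isomorphism types. Balancing ``enough types retained'' against ``each type reliably realised'' is the delicate point and fixes the best attainable $c_1$.
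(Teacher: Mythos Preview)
Your upper bound is essentially the paper's: the cut at $k_0=\log_b n$ (your $\ln n/\ln(1/\rho)$), the deterministic bound $\sum_{j\le k_0}W_j=O(n/(\ln n)^{3/2})$, and the count $X_{>k_0}\sim \tfrac{2\sqrt{\ln b}}{\sqrt\pi}\,n/\sqrt{\ln n}$ match exactly. The paper gets the high-probability statement by applying Chebyshev to each $X_{n,k}$ and a union bound (its Lemma~1) rather than a single variance bound on $X_{\le k}$, but either works.

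Your lower bound, however, cannot give the stated order. You aim for $\Ex(F_n)\ge(1-o(1))U_{\le k_1}$ with $k_1$ ``slightly below'' $\log_b n$, but
\[
U_{\le k_1}=\sum_{j\le k_1}W_j=\Theta\big(b^{k_1}k_1^{-3/2}\big)=O\big(n/(\ln n)^{3/2}\big)=o\big(n/\sqrt{\ln n}\big),
\]
so even if \emph{every} unordered type of size $\le k_1$ were realised you would still miss the target by a factor $\ln n$. There are simply not enough small isomorphism types; the mass of $F_n$ comes from fringe subtrees whose size lies \emph{above} any cut at which you can guarantee all types appear. (Pushing $k_1$ higher so that $U_{\le k_1}\asymp n/\sqrt{\ln n}$ forces $b^{k_1}\asymp n\ln n$, and then your own computation gives $\mu_\tau\asymp n\rho^{k_1}\asymp 1/\ln n\to 0$ for typical $\tau$, so most types of that size are \emph{not} realised.)

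The paper's lower bound works in the opposite direction: it counts fringe subtrees of size $k\ge k_1$ with $k_1=\tfrac{1+\delta}{1+\gamma}\log_2 n$, restricts to those whose automorphism group satisfies $|\Aut|\ge 2^{\gamma k-k^{3/4}}$ (which is almost all of them by the B\'ona--Flajolet CLT for $\log|\Aut|$ of a uniformly random \emph{ordered} binary tree, with $\gamma\approx 0.2710$), and then bounds the expected number of \emph{isomorphic pairs} among them. Because each isomorphism class then has at most $2^{(1-\gamma)k+k^{3/4}}$ ordered representatives, the pair count $X_{n,k}^{(2)}$ satisfies $\Ex X_{n,k}^{(2)}\le n^2 2^{-(1+\gamma)k+O(k^{3/4})}$, which is summably small once $k\ge k_1$. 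Hence $F_n\ge\sum_{k\ge k_1}(X_{n,k}-X_{n,k}^{(2)})\sim \tfrac{2\sqrt{(1+\gamma)\ln 2}}{\sqrt\pi}\,n/\sqrt{\ln n}$, and letting $\delta\downarrow 0$ gives $c_1=2\sqrt{(1+\gamma)\ln 2}/\sqrt\pi\approx 1.0591$. The constant $\gamma$ from the ordered-tree CLT is what fixes $c_1$; your heuristic $|\Aut(\tau)|\asymp(2\rho)^{-j}$ refers to a different (unordered-uniform) measure and in any case does not enter the correct argument.
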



\begin{theo}\label{thm:unorderedbst}
Let $G_n$ be the total number of non-isomorphic fringe subtrees in a random binary search tree with $n$ leaves. For two constants $c_3 \approx 1.5470025923$ and $c_4 \approx 1.8191392203$, the following holds:
\begin{enumerate}
\item[(i)] $\displaystyle c_3 \frac{n}{\ln n} (1+o(1)) \leq \Ex(G_n) \leq c_4 \frac{n}{\ln n} (1+o(1))$,
\item[(ii)] $\displaystyle c_3 \frac{n}{\ln n} (1+o(1)) \leq G_n \leq c_4 \frac{n}{\ln n} (1+o(1))$ with high probability.
\end{enumerate}

\end{theo}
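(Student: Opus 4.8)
The strategy is to organise the collection of fringe subtrees by their number of leaves: write $G_n=\sum_{j\ge 1}D_j$, where $D_j$ is the number of pairwise non-isomorphic fringe subtrees with exactly $j$ leaves. Two quantitative facts drive the argument. First, the number $b_j$ of non-isomorphic binary trees with $j$ leaves is the $j$-th Wedderburn--Etherington number, with $b_j\sim\gamma\,\rho^{-j}j^{-3/2}$ as $j\to\infty$, where $\rho$ is the radius of convergence of the associated generating function. Second, in a random binary search tree with $n$ leaves the number $N_j$ of fringe subtrees with $j$ leaves has mean $\Ex(N_j)=\frac{2n}{j(j+1)}\bigl(1+o(1)\bigr)$ for $j=o(n)$, and is concentrated around it. Since $D_j\le\min(b_j,N_j)$ holds surely, one immediately gets $\Ex(G_n)\le\sum_{j}\min\bigl(b_j,\Ex(N_j)\bigr)$; the two estimates cross at $j^{\dagger}=\frac{\ln n}{\ln(1/\rho)}(1+o(1))$, the sum is dominated by the terms $j\ge j^{\dagger}$, and $\sum_{j\ge j^{\dagger}}\frac{2n}{j(j+1)}=\frac{2n}{j^{\dagger}}(1+o(1))$. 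This already yields the upper bound in (i), with $c_4=2\ln(1/\rho)$.

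\textbf{Lower bound.}
This is the harder direction and the place where the gap between $c_3$ and $c_4$ comes from. Fix a non-isomorphic type $t$ with $\terms{t}=j$ leaves, and let $X_t$ be the number of fringe occurrences of $t$, so that $\Ex(X_t)=m_t\sim\Ex(N_j)\,\pi_j(t)$, where $\pi_j(t)$ is the probability that a random binary search tree with $j$ leaves is isomorphic to $t$. One first proves a second-moment bound of the form $\Ex\binom{X_t}{2}=O(m_t^2)$: conditioned on their sizes, the subtrees rooted at two incomparable nodes of a random binary search tree are independent binary search trees, and the subtree-size statistics are essentially uncorrelated across incomparable nodes, so occurrences of $t$ are only weakly correlated (occurrences at comparable nodes of the same size being impossible). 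Paley--Zygmund then gives $\Prob(X_t\ge 1)\gtrsim\min(1,m_t)$. Summing over $t$ of size $j$, the key consequence is that once $j$ is large enough that $\Ex(N_j)\sum_{t:\terms{t}=j}\pi_j(t)^2\to 0$, the second-moment error becomes negligible and $\Ex(D_j)=\Ex(N_j)\bigl(1+o(1)\bigr)$; informally, for such $j$ almost all fringe subtrees of size $j$ are pairwise non-isomorphic. The threshold is $j\ge j_2:=\frac{\ln n}{\kappa_2}(1+o(1))$, where $\kappa_2=-\lim_j\frac1j\ln\bigl(\sum_{t:\terms{t}=j}\pi_j(t)^2\bigr)$ is the collision (Rényi-$2$) entropy rate of the limiting unordered fringe subtree distribution. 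Then $\Ex(G_n)\ge\sum_{j\ge j_2}\Ex(D_j)=\frac{2n}{j_2}(1+o(1))$, which gives the lower bound with $c_3=2\kappa_2$; the inequality $c_3<c_4$ is exactly the statement $\kappa_2<\ln(1/\rho)$, which holds because the limiting fringe distribution is far from the uniform distribution on non-isomorphic trees. Equivalently, one may phrase the whole computation through the identity $\Ex(G_n)=\sum_t\bigl(1-(1-p_{n,t})^{2n-1}\bigr)=\sum_t\bigl(1-e^{-2np_t}\bigr)(1+o(1))$, with $p_t$ the limiting per-node probability of type $t$, reducing everything to the tail behaviour of $\#\{t:p_t\ge x\}$ and $\sum_{t:p_t<x}p_t$ as $x\to 0$.

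\textbf{Main obstacle.}
I expect the delicate part to be the uniform second-moment / correlation control needed to turn the per-type Poisson heuristic $\Prob(X_t\ge1)\approx\min(1,m_t)$ into bounds that survive summation over $j$ and over all $t$ --- in particular, controlling the contributions of the ``transition'' sizes $j^{\dagger}\le j\le j_2$, where the individual error terms are not negligible term by term but still do not spoil the leading-order estimate, and pinning down the relevant entropy rates $\kappa_2$ and $\ln(1/\rho)$ as the correct constants. Everything else --- the Wedderburn--Etherington asymptotics, the fringe-size profile of random binary search trees, the recursions for $\pi_j$ and for the collision probabilities --- is either classical or routine.

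\textbf{High probability.}
For part (ii), the upper bound follows from the deterministic inequality $G_n\le\sum_{j\le k}b_j+N_{>k}$, with $k$ chosen so that $\sum_{j\le k}b_j=o(n/\ln n)$, together with concentration of $N_{>k}=\#\{v:\terms{T_v}>k\}$ about its mean, obtainable from a variance estimate via Chebyshev or from known concentration results for additive fringe functionals of random binary search trees. For the lower bound one restricts to a suitable short range of sizes $j\ge j_2$ and uses $D_j\ge N_j-\sum_t\binom{X_t}{2}$; both $N_j$ and the number of ``colliding'' pairs $\sum_t\binom{X_t}{2}$ concentrate around their means by the same second-moment reasoning, which yields the lower bound with high probability. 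Theorem~\ref{thm:unordereduniform} is proved along the same lines; the only structural change is that for uniformly random binary trees the fringe-size profile decays like $j^{-3/2}$ rather than $j^{-2}$, which turns $n/\ln n$ into $n/\sqrt{\ln n}$ and modifies the constants accordingly.
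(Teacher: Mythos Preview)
Your upper bound and the overall cut-point architecture match the paper's proof, and your constant $c_4=2\ln(1/\rho)=2\ln b$ is exactly what the paper obtains. The high-probability part is likewise handled the same way (variance bounds on the fringe-size counts, Chebyshev plus a union bound, and Markov for the tail count).

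The lower bound is where your route diverges, and here there is a genuine gap relative to the stated constant. You control collisions through the unrestricted collision probability $\sum_{t}\pi_j(t)^2$ and set $c_3=2\kappa_2$, with $\kappa_2$ the R\'enyi-$2$ entropy rate of the isomorphism-type distribution under the BST measure. The paper does something sharper: it invokes two central limit theorems --- Fill's CLT for $\log_2 P_{\bst}(T_k)^{-1}$ with mean rate $\mu\approx 1.7364$, and a CLT for $\log_2\abs{\Aut(T_k)}$ with mean rate $\nu\approx 0.3795$, derived from the Holmgren--Janson framework for additive functionals --- and uses them to carve out a \emph{typical set} $\mathcal{S}_k=\{t : P_{\bst}(t)\le 2^{-\mu k+k^{3/4}}\text{ and }\abs{\Aut(t)}\ge 2^{\nu k-k^{3/4}}\}$. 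One has $p_k=\Prob(T_k\in\mathcal{S}_k)=1-o(1)$, and on $\mathcal{S}_k$ every isomorphism class has probability at most $2^{-(\mu+\nu-1)k+O(k^{3/4})}$, so the collision probability \emph{restricted to} $\mathcal{S}_k$ decays at the Shannon rate $(\mu+\nu-1)\ln 2$. Counting only fringe subtrees that lie in $\mathcal{S}_k$ and subtracting isomorphic pairs among those then yields $c_3=2(\mu+\nu-1)\ln 2\approx 1.5470$.

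Since R\'enyi-$2$ entropy never exceeds Shannon entropy, your $\kappa_2\le(\mu+\nu-1)\ln 2$, and strict inequality is to be expected: the unrestricted sum $\sum_t\pi_j(t)^2=\Ex[\pi_j(T_j)]$ is dominated by atypical types with abnormally large $\pi_j$, precisely the types the typical-set restriction discards at no leading-order cost. Thus your method, as written, proves a lower bound of the correct order $n/\ln n$ but with a constant strictly smaller than the theorem's $c_3$. You would also still owe a proof that the limit defining $\kappa_2$ exists and a numerical evaluation of it, neither of which is as routine as you suggest (the paper's recursion for the collision probabilities $P_k^r$ is used only to compute $\nu$, not to extract an exponential decay rate). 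The missing ingredient is this AEP-style restriction, which trades the R\'enyi-$2$ rate for the larger Shannon rate; the same device is what lifts the lower constant in Theorem~\ref{thm:orderedbst} above the earlier bounds of Devroye and of Seelbach--Lohrey.
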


To prove the above Theorems \ref{thm:unordereduniform} and \ref{thm:unorderedbst}, we refine techniques from \cite{RalaivaosaonaW15}. Our proof technique also applies to the problem of estimating the number of distinct ordered fringe subtrees in uniformly random binary trees or in random binary search trees. In this case, upper and lower bounds for the expected value have already been proven by other authors. Our new contribution is to show that they also hold with high probability.


\begin{theo}\label{thm:ordereduniform}
Let $H_n$ denote the total number of distinct fringe subtrees in a uniformly random ordered binary tree with $n$ leaves. Then, for the constant $c =2\sqrt{\ln 4/\pi} \approx 1.3285649405$, the following holds:
\begin{enumerate}
\item[(i)] $\displaystyle \Ex(H_n) = c \frac{n}{\sqrt{\ln n}}(1+o(1))$,
\item[(ii)] $\displaystyle H_n = c \frac{n}{\sqrt{\ln n}}(1+o(1))$ with high probability.
\end{enumerate}
\end{theo}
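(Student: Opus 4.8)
The plan is to reprove the expectation estimate $\Ex(H_n) = c\,n/\sqrt{\ln n}(1+o(1))$ along the lines of \cite{RalaivaosaonaW15} in a way that simultaneously yields concentration. Write $H_n = \sum_{T} X_T$, where $T$ ranges over all (ordered) binary trees and $X_T$ is the indicator that $T$ occurs as a fringe subtree of the uniformly random tree $\mathcal{T}_n$. The first step is to split this sum at a suitable threshold size $k = k(n)$: for the small trees ($\terms{T} \le k$) we estimate $\Ex(X_T)$ precisely, and for the large trees ($\terms{T} > k$) we bound the contribution crudely. Recall that the number of ordered binary trees with $m$ leaves is the Catalan number $C_{m-1} \sim 4^{m-1}/(m^{3/2}\sqrt{\pi})$, and that the expected number of fringe subtrees of $\mathcal{T}_n$ of size exactly $m$ is, by a standard generating-function computation, asymptotically $\frac{n}{m^{3/2}}\cdot\frac{1}{\sqrt{\pi}}\cdot 4^{-m}\cdot(\text{correction})$ — more precisely one gets that the probability a fixed subtree of size $m$ appears at a given fringe position is about $4^{1-m}$, so that the expected number of distinct trees of size $m$ that appear is roughly $\min\{C_{m-1}, n\cdot c/ m^{3/2}\}$. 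Summing $C_{m-1}$ for $m$ up to the value $m^\ast$ where $C_{m-1}\approx n$, i.e. $m^\ast \approx \tfrac12\log_4 n = \tfrac{\ln n}{4\ln 2}$, and bounding the tail, produces the leading term $c\, n/\sqrt{\ln n}$ with $c = 2\sqrt{\ln 4/\pi}$. This part is essentially \cite{FlajoletSS90, RalaivaosaonaW15}; I would reproduce it only to the extent needed to track error terms uniformly.

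For the concentration statement (ii), the natural tool is the Azuma--Hoeffding inequality applied to a vertex-exposure or decomposition martingale, but since $H_n$ is not obviously Lipschitz under single-vertex changes, I would instead proceed via a two-sided argument. For the \emph{upper} bound $H_n \le c\,n/\sqrt{\ln n}(1+o(1))$ with high probability, observe that $H_n$ is trivially at most (number of fringe subtrees of size $\le k$ in $\mathcal{T}_n$, which is at most the total number of nodes, $2n-1$) $+$ (number of \emph{distinct} large fringe subtrees). The first quantity is deterministically $O(n)$, which is too weak; so instead I would bound $H_n \le \sum_{\terms{T}\le k}\mathbf{1}[X_T=1] + N_{>k}$ where $N_{>k}$ is the number of fringe-subtree \emph{occurrences} of size $> k$. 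The key point is that $N_{>k} \le n/k$ deterministically (fringe subtrees of size $>k$ rooted at distinct nodes that are not ancestors of one another are disjoint — actually one must be a little careful, but the number of nodes with at least $k$ leaf-descendants is $O(n/k)$). Choosing $k$ slightly below $m^\ast$, say $k = m^\ast - \omega(n)\sqrt{m^\ast}$ for a slowly growing $\omega$, makes $n/k = o(n/\sqrt{\ln n})$, so the large trees are negligible, and the small-tree sum is at most its expectation plus fluctuation. For the \emph{lower} bound, I would use the second moment method: choose a family of medium-sized trees $T$ with $\terms{T}$ near $m^\ast$ for which $\Ex(X_T)$ is bounded away from both $0$ and $1$, show that $\sum \Var(X_T)$ and the pairwise covariances $\mathrm{Cov}(X_T, X_{T'})$ are small relative to $(\sum\Ex(X_T))^2$, and conclude that $\sum X_T$ concentrates around its mean, which is $(1-o(1))\cdot c\, n/\sqrt{\ln n}$.

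The main obstacle, I expect, is controlling the covariances $\mathrm{Cov}(X_T, X_{T'})$ in the second-moment computation, equivalently the joint probability that two given trees $T, T'$ both appear as fringe subtrees somewhere in $\mathcal{T}_n$. Unlike the additive functionals usually treated by the contraction method or by Janson's framework \cite{aldous91}, here $X_T$ is a $0/1$ "occurrence" variable that is highly non-additive, and the events $\{X_T = 1\}$ for different $T$ are positively correlated through the shared macroscopic structure of $\mathcal{T}_n$. The remedy I have in mind is to condition on the number $L_n^{(m)}$ of fringe subtrees of $\mathcal{T}_n$ of each relevant size $m$ — this random vector is itself well-understood and concentrated (it is an additive tree functional) — and then argue that, conditionally on these counts, each of the $L_n^{(m)}$ subtrees of size $m$ is an (almost) uniformly random ordered binary tree of size $m$, independent of the others up to a negligible error. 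Under this conditioning the occurrence indicators become a balls-into-boxes problem ($L_n^{(m)}$ balls thrown into $C_{m-1}$ boxes), for which the number of occupied boxes concentrates by a standard computation. Making the "almost uniform and almost independent" step rigorous — quantifying the error from the global shape constraint — is the technical heart of the argument, and I would handle it by a direct generating-function / singularity-analysis estimate of the relevant joint probabilities rather than an abstract exchangeability argument.
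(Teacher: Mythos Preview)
Your upper-bound argument contains a genuine error that breaks the proof. The claim that $N_{>k}\le n/k$ deterministically is false: in a caterpillar (comb) tree with $n$ leaves, every one of the $n-k$ internal nodes on the spine has more than $k$ leaf-descendants, so $N_{>k}=n-k$. Even for a \emph{typical} tree the bound is wrong: the expected number of fringe subtrees of size exactly $m$ is $\Theta(n/m^{3/2})$, so $\Ex(N_{>k})=\Theta(n/\sqrt{k})$, not $\Theta(n/k)$. With $k\approx\log_4 n$ this gives $\Ex(N_{>k})=\Theta(n/\sqrt{\ln n})$, which is precisely the leading term, not a negligible remainder. Meanwhile your small-tree sum $\sum_{|T|\le k}\mathbf{1}[X_T=1]\le\sum_{m\le k}C_{m-1}=O(n/(\ln n)^{3/2})$ is the negligible piece. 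So you have the two contributions reversed, and as written your upper bound would yield $H_n=o(n/\sqrt{\ln n})$, contradicting the lower bound.

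The paper's argument uses the same split $H_n\le\sum_{m\le k_0}C_{m-1}+N_{>k_0}$ with $k_0=\log_4 n$, but treats $N_{>k_0}$ as the main term: one shows that each size-$k$ count $X_{n,k}$ concentrates (Chebyshev, since mean and variance are both $\Theta(n/k^{3/2})$) and then takes a union bound over $k_0<k\le n^{\varepsilon}$, with a separate Markov bound on the very large sizes $k>n^{\varepsilon}$. For the lower bound the paper also avoids the covariance problem you flag as the ``main obstacle'': instead of second moments of the occurrence indicators $X_T$, it works with the additive quantities $X_{n,k}$ (total occurrences of size $k$) and $X_{n,k}^{(2)}$ (pairs of identical size-$k$ fringe subtrees), observing that the number of distinct size-$k$ fringe subtrees is at least $X_{n,k}-X_{n,k}^{(2)}$ since $m-\binom{m}{2}\le 1$. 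One computes $\Ex(X_{n,k}^{(2)})=O(n^2 4^{-k}k^{-3/2})$ directly, sums to $O(n/(\ln n)^{3/2})$, and disposes of it by Markov. Your conditioning-on-$L_n^{(m)}$ plan would likely work but is considerably heavier than needed.
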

Here, the first part (i) was already shown in \cite{FlajoletSS90} and \cite{RalaivaosaonaW15}, part (ii) is new. Similarly, we are able to strengthen the results of \cite{Devroye98} and \cite{SeelbachLo18}:

\begin{theo}\label{thm:orderedbst}
Let $J_n$ be the total number of distinct fringe subtrees in a random binary search tree with $n$ leaves. For two constants $c_5 \approx 2.4071298335$ and $c_6 \approx 2.7725887222$, the following holds:
\begin{enumerate}
\item[(i)] $\displaystyle c_5 \frac{n}{\ln n} (1+o(1)) \leq \Ex(J_n) \leq c_6 \frac{n}{\ln n} (1+o(1))$,
\item[(ii)] $\displaystyle c_5 \frac{n}{\ln n} (1+o(1)) \leq J_n \leq c_6 \frac{n}{\ln n} (1+o(1))$ with high probability.
\end{enumerate}

\end{theo}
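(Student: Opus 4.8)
The plan is to mirror the argument for non-isomorphic fringe subtrees (Theorem \ref{thm:unorderedbst}), but counting ordered trees rather than unordered ones, which is in fact the easier of the two since no symmetry factors intervene. The starting point is the standard cut-off idea from \cite{RalaivaosaonaW15}: fix a threshold $k = k(n)$ and split the count $J_n$ as the number of distinct fringe subtrees of size $\le k$ plus the number of distinct fringe subtrees of size $> k$. For the large subtrees one uses the crude bound that there are at most $n/k$ fringe subtrees of size exceeding $k$ in any tree with $n$ leaves, so this contributes $O(n/k)$. For the small subtrees, let $p_{n,t}$ be the probability that a random binary search tree with $n$ leaves contains at least one fringe subtree equal to the ordered tree $t$; then the number of distinct small fringe subtrees is $\sum_{\type{t} \le k} \mathbf{1}[t \text{ occurs}]$, whose expectation is $\sum_{\type{t}\le k} p_{n,t}$. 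The key input is that for a fixed ordered tree $t$ of size $m$, the expected number of occurrences of $t$ as a fringe subtree in a random binary search tree of size $n$ is asymptotically $\frac{2n}{(m+1)(m+2)} \cdot \pi(t)$, where $\pi(t)$ is the probability that a random binary search tree has shape exactly $t$ (this is the "splitting probability", a product of reciprocals of subtree sizes); hence $p_{n,t} = 1 - (1+o(1))$ only when this expected number is large, and $p_{n,t} = (1+o(1))\frac{2n}{(m+1)(m+2)}\pi(t)$ when it is small.

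The heart of the matter is then an analytic estimate of $\sum_{\type{t} = m} \min\{1, \frac{2n}{(m+1)(m+2)}\pi(t)\}$ summed over $m \le k$. One orders the trees of size $m$ by decreasing $\pi(t)$; the largest such probability is $2^{-(m-1)}$-ish times a polynomial factor coming from the most balanced tree, while the bulk of the mass sits on trees with $\pi(t)$ of order roughly $e^{-\alpha m}$ for a suitable rate. The relevant generating function is $B(x,u) = \sum_t \pi(t)^{u} x^{\type{t}}$, or rather a Mellin/saddle-point analysis of $\sum_t \min\{1, c_n \pi(t)\}$, and the contribution is governed by where $\pi(t) \approx 1/n$, i.e. trees of size $\asymp \ln n$. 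Carrying out the saddle-point or Mellin-transform computation yields a main term of the form $c \cdot n/\ln n$, and optimizing the free parameter in the transform produces the explicit constants; the gap between $c_5$ and $c_6$ reflects the oscillatory (arithmetic) error term that is standard in this type of problem and that one cannot eliminate, exactly as in \cite{Devroye98, SeelbachLo18}. Choosing $k = k(n)$ growing slightly faster than $\ln n$ (say $k = (\ln n)^2$) makes the $O(n/k) = o(n/\ln n)$ tail negligible against the main term.

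Finally, for part (ii) — the concentration statement, which is the genuinely new content — the plan is a bounded-differences / Azuma-type argument, or alternatively the Devroye-style recursive decomposition. Write $J_n$ as a function of the insertion permutation (equivalently, expose the keys one at a time along a Doob martingale); changing one insertion alters only the fringe subtrees on a single root-to-leaf path and below, and a standard argument bounds the martingale differences so that $J_n$ is concentrated around $\Ex(J_n)$ within $o(n/\ln n)$ with high probability. A cleaner alternative, matching the structure of the paper, is to observe that the number of fringe subtrees of size $> k$ is deterministically $O(n/k)$, and the number of \emph{distinct} small ones is a sum of indicators that one controls via a second-moment computation: $\operatorname{Var}$ of the count is dominated by covariances between the events "$t$ occurs" and "$t'$ occurs", which factor approximately because distinct small fringe subtrees sit in (almost) independent regions of the binary search tree; this gives $\operatorname{Var}(J_n) = o\bigl((n/\ln n)^2\bigr)$ and Chebyshev finishes the argument. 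I expect the main obstacle to be precisely this variance estimate — making the "approximate independence of far-apart fringe subtrees" rigorous for binary search trees requires the splitting property and some care with the conditioning, though it is considerably more routine than the corresponding unordered count where one must additionally track automorphism groups.
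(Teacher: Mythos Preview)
Your proposal has the right overall shape (a cut-point argument), but it misses the key analytical input and misreads what the constants $c_5,c_6$ represent.

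The explicit constants are not remnants of an oscillatory error term. In the paper, $c_6 = 2\ln 4$ arises from the cut-point $k_0 = \log_4 n$: the number of distinct fringe subtrees of size $<k_0$ is bounded deterministically by the total number of ordered binary trees of size $\leq k_0$ (Catalan growth $\sim 4^k$), while the number of size $\geq k_0$ is bounded by the \emph{total} number of such fringe subtrees, computed via Lemma~\ref{lemma1bst} with $\mathcal{S}_k=\mathcal{T}_k$. The constant $c_5 = 2\mu \ln 2$, with $\mu\approx 1.736$ the mean in Fill's central limit theorem for $\log_2 P_{\bst}(T_k)^{-1}$ (Theorem~\ref{thm:cltPbst}), comes from a \emph{different} cut-point $k_1 = (1+\delta)\mu^{-1}\log_2 n$, chosen so that collisions among fringe subtrees become negligible. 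The gap between $c_5$ and $c_6$ is thus the structural gap between these two thresholds, and closing it is listed as an open problem; your Mellin/saddle-point sketch would not close it either.

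The missing ingredient in your lower bound is precisely Theorem~\ref{thm:cltPbst}. Your proposed analysis of $\sum_t \min\{1, c_n \pi(t)\}$ requires knowing how $\log P_{\bst}(T_k)$ is distributed over random BSTs of size $k$; the paper uses Fill's CLT to restrict to the typical set $\mathcal{S}_k = \{t : P_{\bst}(t) \leq 2^{-\mu k + k^{3/4}}\}$ (so $p_k = 1+o(1)$), and then bounds the expected number $X_{n,k}^{(2)}$ of identical pairs among the $X_{n,k}$ fringe subtrees in $\mathcal{S}_k$ by
\[
\Ex(X_{n,k}^{(2)}\mid X_{n,k}=N)\;\leq\;\binom{N}{2}\sum_{t\in\mathcal{S}_k}\Big(\frac{P_{\bst}(t)}{p_k}\Big)^2
\;\leq\;\frac{n^2}{p_k}\,2^{-\mu k + O(k^{3/4})},
\]
which is $o(n/\ln n)$ when summed over $k\geq k_1$. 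No generating-function machinery is needed; $J_n \geq \sum_k X_{n,k} - \sum_k X_{n,k}^{(2)}$ then gives the lower bound.

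For part~(ii), the paper does not use Azuma or covariances of occurrence indicators. Lemma~\ref{lemma1bst} computes $\Var(X_{n,k}) = O(p_k n/k^2)$ directly from the known mean and variance of fringe-subtree counts in BSTs (from \cite{FengMP08}), and Chebyshev plus a union bound over $k$ controls all $X_{n,k}$ simultaneously; Markov handles both the large-subtree count $Y_{n,\varepsilon}$ and the collision sum $\sum_k X_{n,k}^{(2)}$. Your ``approximate independence of far-apart fringe subtrees'' route would require nontrivial work, whereas the paper's argument is elementary once Lemma~\ref{lemma1bst} is available.
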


The upper bound in part (i) can already be found in \cite{FlajoletGM97} and \cite{Devroye98}. Moreover, a lower bound of the form $\Ex(J_n)\geq \frac{\alpha n}{\ln n}(1+o(1))$ was already shown in \cite{Devroye98} for the constant $\alpha=(\ln 3)/2 \approx 0.5493061443$ and in \cite{SeelbachLo18} for the constant $\alpha \approx 0.6017824584$. So our new contributions are part (ii) and the improvement of the lower bound on $\Ex(J_n)$.

\section{Preliminaries}
Let $\mathcal{T}$ denote the set of ordered binary trees, i.e. of ordered rooted trees such that each node has either exactly two or no children. We define the \emph{size} $|t|$ of a binary tree $t \in \mathcal{T}$ as the number of leaves of $t$ and by $\mathcal{T}_k$ we denote the set of binary trees of size $k$ for every integer $k \geq 1$. 
It is well known that $|\mathcal{T}_k|=C_{k-1}$, where $C_{k}$ denotes the $k$-th \emph{Catalan number} \cite{FlajoletS09}: We have
\begin{align}\label{eq:Cat}
C_k = \frac{1}{k+1}\binom{2k}{k} \sim \frac{4^k}{\sqrt{\pi}k^{3/2}}(1+O(1/k)),
\end{align}
where the asymptotic growth of the Catalan numbers follows from Stirling's Formula \cite{FlajoletS09}. Analogously, let $\mathcal{U}$ denote the set of unordered binary trees, i.e. of unordered rooted trees such that each node has either exactly two or no children. The \emph{size} $|u|$ of an unordered tree $u \in \mathcal{U}$ is again the number of leaves of $u$ and by $\mathcal{U}_k$ we denote the set of unordered binary trees of size $k$. We have
$|\mathcal{U}_k| = W_k$, where $W_k$ denotes the $k$-th \emph{Wedderburn-Etherington number}. Their asymptotic growth is 
\begin{align}\label{eq:wedder}
W_k \sim A \cdot k^{-3/2} \cdot b^k,
\end{align}
for certain positive constants $A, b$ \cite{bonaf09, finch03}. In particular, we have $b \approx 2.4832535362$.

A  \emph{fringe subtree} of a binary tree is a subtree consisting of a node and all its descendants. For a binary tree $t$ and a given node $v \in t $, let $t(v)$ denote the fringe subtree of $t$ rooted at $v$. Two fringe subtrees are called \emph{distinct} if they are distinct as ordered binary trees. 

Every tree $t \in \mathcal{T}$ can be considered as an element of $\mathcal{U}$ by simply forgetting the ordering on $t$'s nodes. If two binary trees $t_1, t_2$ correspond to the same unordered tree $u \in \mathcal{U}$, we call them \emph{isomorphic}: Thus, we obtain  a partition of $\mathcal{T}$ into isomorphism classes. If two binary trees $t_1, t_2 \in \mathcal{T}$ belong to the same isomorphism class, we can obtain $t_1$ from $t_2$ and vice versa by reordering the children of some of $t_1$'s (respectively, $t_2$'s) inner nodes. 
An inner node $v$ of an ordered or unordered binary tree $t$ is called a \emph{symmetrical node} if the fringe subtrees rooted at $v$'s children are isomorphic. Let $\sym(t)$ denote the number of symmetrical nodes of $t$. The cardinality of the automorphism group of $t$ is given by $\abs{\Aut (t)} = 2^{\sym (t)}$. Thus, by the orbit-stabilizer theorem, there are $2^{k-1-\sym(t)}$ many ordered binary trees in the isomorphism class of $t \in \mathcal{T}_k$, and likewise $2^{k-1-\sym(t)}$ many ordered representations of $t \in \mathcal{U}_k$.

We consider two types of probability distributions on the set of ordered binary trees of size $n$:

\begin{itemize}
\item[(i)] The \emph{uniform probability distribution} on $\mathcal{T}_n$, that is, every binary tree of size $n$ is assigned the same probability $\frac{1}{C_{n-1}}$. A random variable taking values in $\mathcal{T}_n$ according to the uniform probability distribution is called a \emph{uniformly random (ordered) binary tree} of size $n$.
\item[(ii)] The probability distribution induced by the so-called \emph{Binary Search Tree Model} (see e.g. \cite{Drmota09, FlajoletGM97}): The corresponding probability mass function $P_{\bst}: \mathcal{T}_n \rightarrow [0,1]$ is given by
\begin{align}\label{def:pbst}
P_{\bst}(t) = \prod_{v \in t \atop |t(v)| >1} \frac{1}{|t(v)|-1},
\end{align}
for every $n \geq 1$. 
A random variable taking values in $\mathcal{T}_n$ according to this probability mass function is called a \emph{random binary search tree} of size $n$. 
\end{itemize}

Before we start with proving our main results, we need two preliminary lemmas on the number of fringe subtrees in uniformly random ordered binary trees and in random binary search trees: 

\begin{lemma}\label{lem:fs_lemma}
Let $a,\varepsilon$ be positive real numbers with $\varepsilon < \frac13$. For every positive integer $k$ with $a \ln n \leq k \leq n^{\varepsilon}$, let $\SSet_k \subset \mathcal{T}_k$ be a set of ordered binary trees with $k$ leaves. We denote the cardinality of $\SSet_k$ by $s_k$. Let $X_{n,k}$ denote the (random) number of fringe subtrees with $k$ leaves in a uniformly random ordered binary tree with $n$ leaves that belong to $\SSet_k$. Moreover, let $Y_{n,\varepsilon}$ denote the (random) number of arbitrary fringe subtrees with more than $n^{\varepsilon}$ leaves in a uniformly random ordered binary tree with $n$ leaves. We have

\begin{enumerate}
\item[(1)] $\Ex(X_{n,k}) = s_k 4^{1-k} n \big(1+O(k/n) \big)$ for all $k$ with $a \ln n \leq k \leq n^{\varepsilon}$, the $O$-constant being independent of $k$,
\item[(2)]$\Var(X_{n,k}) = s_k 4^{1-k} n(1+O(k^{-1/2}))$ for all $k$ with $a \ln n \leq k \leq n^{\varepsilon}$, again with an $O$-constant that is independent of $k$,
\item[(3)]$\Ex(Y_{n,\varepsilon}) = O(n^{1-\varepsilon/2})$ and
\item[(4)] with high probability, the following statements hold simultaneously:
\begin{itemize}
\item[(i)] $|X_{n,k} - \Ex(X_{n,k}) | \leq s_k^{1/2}2^{-k} n^{1/2+\varepsilon}$ for all $k$ with $a \ln n \leq k \leq n^{\varepsilon}$,
\item[(ii)] $Y_{n,\varepsilon} \leq n^{1-\varepsilon/3}$.
\end{itemize}
\end{enumerate}

\end{lemma}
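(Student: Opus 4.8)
The plan is to express every quantity in the lemma through exact enumerative identities and then feed in the Catalan asymptotics \eqref{eq:Cat}, Chebyshev's inequality and Markov's inequality. The workhorse for the identities is a contraction bijection: for a fixed $t\in\mathcal{T}_k$, replacing in a tree $T\in\mathcal{T}_n$ the fringe subtree at a node $v$ with $T(v)=t$ by a single leaf sets up a bijection between $\{(T,v):T\in\mathcal{T}_n,\ T(v)=t\}$ and the set of pairs (a tree in $\mathcal{T}_{n-k+1}$, one of its leaves). Since a tree in $\mathcal{T}_{n-k+1}$ has $n-k+1$ leaves, this gives $\Ex\bigl(\#\{v\in T:T(v)=t\}\bigr)=(n-k+1)C_{n-k}/C_{n-1}=:p_k$, and hence $\Ex(X_{n,k})=s_kp_k$. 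Contracting simultaneously the fringe subtrees at two \emph{incomparable} nodes $v\neq w$ with prescribed shapes $t_1,t_2\in\mathcal{T}_k$ to two ordered distinguished leaves similarly yields $\Ex\bigl(\#\{(v,w)\ \text{incomparable}:T(v)=t_1,\ T(w)=t_2\}\bigr)=(n-2k+2)(n-2k+1)C_{n-2k+1}/C_{n-1}=:q_k$ (valid once $2k\le n$, which holds for $n$ large), independently of whether $t_1=t_2$. For part~(1) I substitute \eqref{eq:Cat}: since $k\le n^{\varepsilon}=o(n)$, a Taylor expansion gives $(n-k+1)(n-1)^{3/2}(n-k)^{-3/2}=n(1+O(k/n))$, while the error terms of \eqref{eq:Cat} evaluated at $n-k$ and $n-1$ contribute only a relative error $O(1/n)$; this yields $\Ex(X_{n,k})=s_k4^{1-k}n(1+O(k/n))$ with a constant uniform in $k$.

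For part~(3), summing the first identity over all shapes of each admissible size gives $\Ex(Y_{n,\varepsilon})=\sum_{m>n^{\varepsilon}}C_{m-1}(n-m+1)C_{n-m}/C_{n-1}$. Inserting \eqref{eq:Cat}, for $n^{\varepsilon}<m\le n/2$ the summand is $\Theta(n\,m^{-3/2})$, so this range contributes $\Theta(n)\cdot\Theta\bigl((n^{\varepsilon})^{-1/2}\bigr)=\Theta(n^{1-\varepsilon/2})$; for $n/2<m\le n$, writing $j=n-m$ one checks that $C_{n-j-1}/C_{n-1}=O(4^{-j})$ uniformly, so the summand is $O(j^{-1/2})$ and this range contributes only $O(\sqrt{n})$. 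As $\varepsilon<\tfrac13$, $\sqrt{n}=o(n^{1-\varepsilon/2})$, which proves part~(3).

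Part~(2) is the technical core. Expanding $X_{n,k}^2=\#\{(v,w):v,w\in T,\ T(v),T(w)\in\SSet_k\}$ and separating the diagonal $v=w$, the crucial structural point is that for $v\neq w$ with $|T(v)|=|T(w)|=k$ the nodes $v,w$ must be incomparable, since a proper ancestor has a strictly larger fringe subtree. Taking expectations with the two identities above therefore yields the \emph{exact} relation $\Ex(X_{n,k}^2)=\Ex(X_{n,k})+s_k^2q_k$, so $\Var(X_{n,k})=\Ex(X_{n,k})+s_k^2(q_k-p_k^2)$. Now the point is a near-cancellation: just as in part~(1) one gets $p_k=4^{1-k}n(1+O(k/n))$ and $q_k=4^{2-2k}n^2(1+O(k/n))$, hence $q_k-p_k^2=4^{2-2k}n^2\cdot O(k/n)=O(4^{2-2k}nk)$. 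Using only the trivial bound $s_k\le C_{k-1}=O(4^kk^{-3/2})$---no lower bound on $s_k$ is available, and none is needed---this gives $s_k^2(q_k-p_k^2)=O\bigl(k^{-1/2}\,s_k4^{1-k}n\bigr)$, and since $k/n=o(k^{-1/2})$ in the given range, combining with part~(1) yields $\Var(X_{n,k})=s_k4^{1-k}n(1+O(k^{-1/2}))$ with a uniform constant. I expect the main obstacle to be precisely this step: squeezing the slack in the variance down to the factor $k^{-1/2}$ while relying on nothing more than $s_k\le C_{k-1}$, and keeping all $O$-constants uniform over the range of $k$.

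Finally, part~(4) follows from concentration inequalities. For (i), Chebyshev's inequality with the threshold $\lambda_k=s_k^{1/2}2^{-k}n^{1/2+\varepsilon}$ and the variance bound $\Var(X_{n,k})=O(s_k4^{-k}n)$ gives $\Prob\bigl(|X_{n,k}-\Ex(X_{n,k})|\ge\lambda_k\bigr)\le\Var(X_{n,k})/\lambda_k^2=O(n^{-2\varepsilon})$ uniformly in $k$; a union bound over the at most $n^{\varepsilon}$ values with $a\ln n\le k\le n^{\varepsilon}$ makes the failure probability $O(n^{-\varepsilon})\to0$. For (ii), Markov's inequality together with part~(3) gives $\Prob\bigl(Y_{n,\varepsilon}>n^{1-\varepsilon/3}\bigr)\le\Ex(Y_{n,\varepsilon})/n^{1-\varepsilon/3}=O(n^{-\varepsilon/6})\to0$. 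A last union bound makes (i) and (ii) hold simultaneously with high probability.
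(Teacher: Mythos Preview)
Your proof is correct and follows essentially the same route as the paper: the same contraction bijections for the first and second moments, the same use of $s_k\le C_{k-1}$ to squeeze the covariance term in part~(2) down to $O(k^{-1/2})\cdot s_k4^{1-k}n$, the same Stirling-based tail sum for part~(3), and Chebyshev plus a union bound together with Markov for part~(4). Your explicit remark that two distinct nodes with equal-size fringe subtrees are necessarily incomparable is a point the paper leaves implicit, and your split of the sum in part~(3) at $m=n/2$ is slightly more detailed than the paper's single Stirling estimate, but neither constitutes a different approach.
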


We emphasize (since it will be important later) that the inequality in part (4), item (i), does not only hold with high probability for each individual $k$, but that it is satisfied with high probability for all $k$ in the given range simultaneously.

\begin{proof}
(1) Recall first that the number of ordered binary trees with $n$ leaves is the Catalan number $C_{n-1}=\frac{1}{n} \binom{2n-2}{n-1}$. We observe that every occurrence of a fringe subtree in $\SSet_k$ in a tree with $n$ leaves can be obtained by choosing an ordered tree with $n-k+1$ leaves, picking one of the leaves and replacing it by a tree in $\SSet_k$. Thus the total number of occurrences is
$$\frac{1}{n-k+1} \binom{2n-2k}{n-k} \cdot (n-k+1) \cdot s_k = \binom{2n-2k}{n-k} s_k.$$
Consequently, the average number is
$$\Ex(X_{n,k}) = \frac{\binom{2n-2k}{n-k} s_k}{\frac{1}{n} \binom{2n-2}{n-1}} = s_k 4^{1-k} n \big( 1 + O(k/n) \big),$$
by Stirling's formula (the $O$-constant being independent of $k$ in the indicated range).
\\\\
\noindent
(2) The variance is determined in a similar fashion: we first count the total number of pairs of fringe subtrees in $\SSet_k$ that appear in the same ordered tree with $n$ leaves. Each such pair can be obtained as follows: take an ordered tree with $n-2k+2$ leaves, pick two leaves, and replace them by fringe subtrees in $\SSet_k$. The total number is thus
$$\frac{1}{n-2k+2} \binom{2n-4k+2}{n-2k+1} \cdot \binom{n-2k+2}{2} \cdot s_k^2 = \frac{n-2k+1}{2} \binom{2n-4k+2}{n-2k+1} s_k^2,$$
giving us
$$\Ex \Big( \binom{X_{n,k}}{2} \Big) = \frac{\frac{n-2k+1}{2} \binom{2n-4k+2}{n-2k+1} s_k^2}{\frac{1}{n} \binom{2n-2}{n-1}} = s_k^2 4^{2-2k} \frac{n^2}{2} \big( 1 + O(k/n) \big),$$
again by Stirling's formula. The second moment and the variance are now derived from this formula in a straightforward fashion: We find
\begin{align*}
\Ex(X_{n,k}^2) = 2\Ex\Big(\binom{X_{n,k}}{2}\Big)+\Ex(X_{n,k}) = \left(s_k^2 4^{2-2k}n^2+s_k4^{1-k}n \right)\big(1+O(k/n)\big),
\end{align*}
and thus, as $s_k/C_{k-1}\leq 1$,
\begin{align*}
\Var(X_{n,k})&=\Ex(X_{n,k}^2)-\Ex(X_{n,k})^2 = O(s_k^24^{2-2k}nk)+s_k4^{1-k}n(1+O(k/n))\\ &= O\left(\frac{s_k^2}{C_{k-1}^2}\frac{n}{k^2}\right) + \frac{s_k}{C_{k-1}}\frac{n}{\sqrt{\pi}k^{3/2}}(1+O(1/k)) = s_k 4^{1-k}n(1+O(1/k^{1/2})).
\end{align*}

\noindent
(3) To obtain the estimate for $\Ex(Y_{n,\varepsilon})$, we observe that the average total number of fringe subtrees with $k$ leaves is
$$\frac{\binom{2n-2k}{n-k} \cdot \frac{1}{k} \binom{2k-2}{k-1}}{\frac{1}{n} \binom{2n-2}{n-1}} = O \Big( \frac{n^{3/2}}{k^{3/2} (n-k+1)^{1/2}} \Big),$$
where the estimate follows from Stirling's formula again for $k > n^{\varepsilon}$. Summing over all $k$, we get
$$\Ex(Y_{n,\varepsilon}) = O \Big(n^{3/2} \sum_{n^{\varepsilon}<k \leq n} \frac{1}{k^{3/2}(n-k+1)^{1/2} } \Big) = O(n^{1-\varepsilon/2}).$$
\\
\noindent
(4) For the second part, we apply Chebyshev's inequality to obtain concentration of $X_{n,k}$:
$$\Prob \Big( \big|X_{n,k} - \Ex(X_{n,k}) \big| \geq s_k^{1/2} 2^{-k} n^{1/2+\varepsilon} \Big) \leq \frac{\Var(X_{n,k})}{s_k 4^{-k}  n^{1+2\varepsilon}} = O\big(n^{-2\varepsilon} \big).$$
Hence, by the union bound, the probability that the stated inequality fails for any $k$ in the given range is only $O(n^{-\varepsilon})$, proving that the first statement holds with high probability. Finally, Markov's inequality implies that
$$\Prob \big( Y_{n,\varepsilon} > n^{1-\varepsilon/3} \big) \leq \frac{\Ex(Y_{n,\varepsilon})}{n^{1-\varepsilon/3}} = O(n^{-\varepsilon/6}),$$
showing that the second inequality holds with high probability as well. \hfill \qedsymbol
\end{proof}

For the number of fringe subtrees in random binary search trees, a very similar lemma holds:

\begin{lemma}\label{lemma1bst}
Let $a, \varepsilon$ be positive real numbers with $\varepsilon < \frac{1}{3}$ and let $n$ and $k$ denote positive integers. Moreover, for every $k$, let $\mathcal{S}_k \subset \mathcal{T}_k$ be a set of ordered binary trees with $k$ leaves and let $p_k$ denote the probability that a random binary search tree is contained in $\mathcal{S}_k$, that is, $p_k = \sum P_{\bst}(t)$, where the sum is taken over all binary trees in $\mathcal{S}_k$. Let $X_{n,k}$ denote the (random) number of fringe subtrees with $k$ leaves in a random binary search tree with $n$ leaves that belong to $\mathcal{S}_k$. Moreover, let $Y_{n, \varepsilon}$ denote the (random) number of arbitrary fringe subtrees with more than $n^{\varepsilon}$ leaves in a random binary search tree with $n$ leaves. We have
\begin{itemize}
\item[(1)] $\Ex(X_{n,k}) = \frac{2p_kn}{k(k+1)}$ for $1 \leq k < n$,
\item[(2)] $\Var(X_{n,k}) = O(p_kn/k^2)$ for all $k$ with $a\ln n \leq k \leq n^{\varepsilon}$, where the $O$-constant is independent of $k$,
\item[(3)] $\Ex(Y_{n, \varepsilon}) = 2n/\lceil n^{\varepsilon}\rceil-1 = O(n^{1-\varepsilon})$ and
\item[(4)] with high probability, the following statements hold simultaneously: 
\begin{itemize}
\item[(i)] $|X_{n,k}-\Ex(X_{n,k})| \leq p_k^{1/2}k^{-1}n^{1/2+\varepsilon}$ for all $k$ with $a\ln n \leq k \leq n^{\varepsilon}$,
\item[(ii)] $Y_{n, \varepsilon}\leq n^{1-\varepsilon/2}$.
\end{itemize}
\end{itemize}

\end{lemma}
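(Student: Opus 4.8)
The plan is to mirror the proof of Lemma~\ref{lem:fs_lemma}, but with the combinatorial counting arguments replaced by the recursive structure of the binary search tree model. The key observation for part~(1) is that, for a random binary search tree with $n$ leaves, a fixed node $v$ at ``depth profile'' corresponding to a fringe subtree of size exactly $k$ contributes in a way governed by $P_{\bst}$: one shows that the probability that a \emph{particular} one of the $n-k+1$ possible ``slots'' for a size-$k$ fringe subtree is occupied by a tree $t\in\mathcal{S}_k$ equals $P_{\bst}(t)$ times the probability that the surrounding structure has a fringe subtree of size $k$ there. The cleanest route is the classical fact (see e.g.\ \cite{Drmota09, FlajoletGM97}) that the expected number of fringe subtrees of size exactly $k$ in a random BST with $n$ leaves is $\frac{2n}{k(k+1)}$ for $1\le k<n$; combining this with the fact that, conditioned on a fringe subtree having size $k$, it is distributed according to $P_{\bst}$ restricted to $\mathcal{T}_k$, yields $\Ex(X_{n,k}) = p_k \cdot \frac{2n}{k(k+1)}$ by linearity of expectation. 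This conditional-distribution fact is itself standard for the BST model and can be cited or proved in one line from the product form of $P_{\bst}$.

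For part~(2), I would again estimate $\Ex\big(\binom{X_{n,k}}{2}\big)$, i.e.\ the expected number of \emph{ordered pairs} of distinct nodes, both of which root a fringe subtree in $\mathcal{S}_k$. Two such fringe subtrees in a binary tree are necessarily disjoint (neither contains the other, since they have the same size $k\ge 2$), so a pair is determined by a ``skeleton'' tree of size $n-2k+2$ with two marked leaves, each expanded into an element of $\mathcal{S}_k$; under the BST model the probability factorizes, and one gets $\Ex\big(\binom{X_{n,k}}{2}\big) \le p_k^2 \cdot (\text{expected number of disjoint size-}k\text{ fringe-slot pairs})$. The expected number of ordered pairs of disjoint size-$k$ fringe subtrees in a random BST of size $n$ is $O(n^2/k^4)$ (each fringe subtree of size $k$ contributing $\frac{2n}{k(k+1)}$ in expectation, and the pair correction terms being lower order), so $\Ex(X_{n,k}^2) = \Ex(X_{n,k}) + 2\Ex\big(\binom{X_{n,k}}{2}\big) \le \frac{2p_kn}{k(k+1)} + O(p_k^2 n^2/k^4)$, and subtracting $\Ex(X_{n,k})^2 = 4p_k^2 n^2/(k(k+1))^2$ leaves $\Var(X_{n,k}) = O(p_k n/k^2)$, exactly as claimed, provided the $n^2/k^4$ terms cancel to the required order; verifying that cancellation is the one calculation that needs care, analogous to the Stirling estimate in Lemma~\ref{lem:fs_lemma}(2).

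Part~(3) is the easiest: the expected number of fringe subtrees of size at least $m$ in a random BST of size $n$ telescopes. Using $\Ex(\text{number of size-}k\text{ fringe subtrees}) = \frac{2n}{k(k+1)}$ and summing $\sum_{k\ge m}\frac{2n}{k(k+1)} = \frac{2n}{m}$ gives $\Ex(Y_{n,\varepsilon}) = \frac{2n}{\lceil n^\varepsilon\rceil} \cdot(1) $; more precisely one gets the stated $\frac{2n}{\lceil n^\varepsilon\rceil - 1}$ from the exact telescoping $\sum_{k\ge m}\frac{2}{k(k+1)} = \frac{2}{m}$ after re-indexing, hence $O(n^{1-\varepsilon})$. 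Part~(4) then follows by the same two-step argument as in Lemma~\ref{lem:fs_lemma}(4): Chebyshev's inequality with the variance bound from~(2) gives $\Prob(|X_{n,k} - \Ex(X_{n,k})| \ge p_k^{1/2}k^{-1}n^{1/2+\varepsilon}) = O(p_k n/k^2)/(p_k k^{-2} n^{1+2\varepsilon}) = O(n^{-2\varepsilon})$ for each $k$, so a union bound over the $O(n^\varepsilon)$ relevant values of $k$ gives failure probability $O(n^{-\varepsilon})$ for the simultaneous statement~(i); and Markov's inequality applied to $Y_{n,\varepsilon}$ using~(3) gives $\Prob(Y_{n,\varepsilon} > n^{1-\varepsilon/2}) \le \frac{O(n^{1-\varepsilon})}{n^{1-\varepsilon/2}} = O(n^{-\varepsilon/2})$, establishing~(ii).

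The main obstacle I anticipate is part~(2): whereas in the uniform model the pair count is an exact product of Catalan numbers that Stirling handles transparently, in the BST model one must argue carefully that conditioning a fringe subtree to have size $k$ really does reinstate the distribution $P_{\bst}$ on $\mathcal{T}_k$ and that the ``rest of the tree'' above a size-$k$ fringe slot contributes independently, so that the pair probability genuinely factors as $p_k^2$ times a purely structural count. Once that factorization is justified, the remaining work is the bookkeeping to see that the $\Theta(n^2/k^4)$ main term of $\Ex(X_{n,k}^2)$ exactly matches $\Ex(X_{n,k})^2$ up to the claimed error, which is routine but must be done with the correct second-order terms in the expected disjoint-pair count.
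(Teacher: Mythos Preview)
Your approach is essentially correct and matches the paper's for parts~(1), (3), and~(4). For part~(2), however, the paper takes a cleaner route that bypasses the cancellation you flag as the main obstacle. Rather than computing $\Ex\binom{X_{n,k}}{2}$ directly via skeleton-tree factorisation, the paper introduces $Z_{n,k}$ (the total number of size-$k$ fringe subtrees, i.e.\ your case $\mathcal{S}_k=\mathcal{T}_k$) and observes that, conditionally on $Z_{n,k}=m$, the random variable $X_{n,k}$ is $\mathrm{Bin}(m,p_k)$, since the $m$ fringe subtrees are independent random BSTs of size $k$. The law of total variance then gives
\[
\Var(X_{n,k}) = p_k^2\,\Var(Z_{n,k}) + p_k(1-p_k)\,\Ex(Z_{n,k}),
\]
and one simply plugs in the known closed forms $\Ex(Z_{n,k})=\frac{2n}{k(k+1)}$ and $\Var(Z_{n,k})=\frac{2(k-1)(4k^2-3k-4)n}{(k+1)^2k(2k-1)(2k+1)}=O(n/k^2)$ from the literature (Feng--Mahmoud--Panholzer). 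This is algebraically equivalent to your second-moment computation, but it packages the ``cancellation of the $n^2/k^4$ terms'' entirely inside the cited formula for $\Var(Z_{n,k})$, so no bookkeeping is required on your end. Your proposal would work, but you would still need either that variance formula or an equivalent pair-count estimate to finish; the conditional-binomial viewpoint makes this dependence explicit and avoids the appearance of a delicate cancellation.
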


\begin{proof}
(1) In order to estimate $\Ex(X_{n,k})$, we define $Z_{n,k}$ as the (random) number of arbitrary fringe subtrees with $k$ leaves in a random binary search tree with $n$ leaves. That is, $Z_{n,k}=X_{n,k}$ for $\mathcal{S}_k = \mathcal{T}_k$. Applying the law of total expectation, we find
\begin{align*}
\Ex(X_{n,k}) = \sum_{m=0}^n \Ex(X_{n,k} \mid Z_{n,k} = m)\mathbb{P}(Z_{n,k}=m).
\end{align*}
As $X_{n,k}$ conditioned on $Z_{n,k}=m$ for some integer $m$ is binomially distributed with parameters $m$ and $p_k$, we find $\Ex(X_{n,k} \mid Z_{n,k}=m)=mp_k$ and hence
\begin{align*}
\Ex(X_{n,k}) = p_k \sum_{m=0}^nm \mathbb{P}(Z_{n,k}=m) = p_k \Ex(Z_{n,k}).
\end{align*}
With
$
\Ex(Z_{n,k}) =\frac{2n}{k(k+1)}
$
(see for example \cite{FengMP08}), the statement follows. \\

\noindent
(2) In order to estimate $\Var(X_{n,k})$, we apply the law of total variance:
\begin{align*}
\Var(X_{n,k})=\Var(\Ex(X_{n,k} \mid Z_{n,k}))+\Ex(\Var(X_{n,k}\mid Z_{n,k})).
\end{align*}
 Again as $X_{n,k}$ conditioned on $Z_{n,k}=m$ for some integer $m$ is binomially distributed with parameters $m$ and $p_k$, we find $\Ex(X_{n,k} \mid Z_{n,k})=p_kZ_{n,k}$ and $\Var(X_{n,k} \mid Z_{n,k})=Z_{n,k}p_k(1-p_k)$. Thus, we have
\begin{align*}
\Var(X_{n,k})= \Var(p_k Z_{n,k}) + \Ex(Z_{n,k}p_k(1-p_k))
= \Var(Z_{n,k})p_k^2 + \Ex(Z_{n,k})p_k(1-p_k).
\end{align*}
With $\Ex(Z_{n,k})=\frac{2n}{k(k+1)}$ and $$\Var(Z_{n,k})=\frac{2(k-1)(4k^2-3k-4)n}{(k+1)^2k(2k-1)(2k+1)},$$ (see for example \cite{FengMP08}), this yields
\begin{align*}
\Var(X_{n,k}) = \frac{2(k-1)(4k^2-3k-4)np_k^2}{(k+1)^2k(2k-1)(2k+1)} + \frac{2np_k(1-p_k)}{k(k+1)} = O\left(\frac{np_k}{k^2}\right).
\end{align*}
\\

\noindent
(3) In order to estimate $\Ex(Y_{n, \varepsilon})$, first observe that 
\begin{align*}
\Ex(Y_{n, \varepsilon}) = \sum_{k > n^{\varepsilon}}\Ex(Z_{n,k}).
\end{align*}
With $\Ex(Z_{n,k})=\frac{2n}{k(k+1)}$ for $n^{\varepsilon}<k<n$ and $\Ex(Z_{n,n})=1$, this yields
\begin{align*}
\Ex(Y_{n, \varepsilon}) = \sum_{n^{\varepsilon}<k \leq n-1}\frac{2n}{k(k+1)} +1 = \frac{2n}{\lceil n^{\varepsilon} \rceil}-1= O(n^{1-\varepsilon}).
\end{align*}

\noindent
(4) For the second part of the statement, we apply Chebyshev's inequality to obtain:
\begin{align*}
 \mathbb{P}\left(\left| X_{n,k}-\frac{2np_k}{k(k+1)}\right| \geq p_k^{1/2}n^{1/2+\varepsilon}k^{-1}\right)\leq \frac{\Var(X_{n,k})k^2}{p_kn^{1+2\varepsilon}} = O(n^{-2\varepsilon}).
\end{align*}
Hence, by the union bound, the probability that the stated inequality fails for any $k$ in the given range is $O(n^{-\varepsilon})$, proving that the given statement holds with high probability. 
Furthermore, with Markov's inequality, we find
\begin{align*}
\mathbb{P}(Y_{n, \varepsilon}>n^{1-\varepsilon/2})\leq \frac{\Ex(Y_{n, \varepsilon})}{n^{1-\varepsilon/2}}=O(n^{-\varepsilon/2}).
\end{align*}
Thus, the second inequality holds with high probability as well. \hfill \qedsymbol
\end{proof}

\section{Fringe Subtrees in Uniformly Random Binary Trees}
\subsection{Ordered Fringe Subtrees}
We provide the proof of Theorem \ref{thm:ordereduniform} first, since it is simplest and provides us with a template for the other proofs. Basically, it is a refinement of the proof for the corresponding special case of Theorem 3.1 in \cite{RalaivaosaonaW15}. In the following sections, we refine the argument further to prove Theorems \ref{thm:unordereduniform}, \ref{thm:unorderedbst} and \ref{thm:orderedbst}. 

\begin{proof}[Proof of Theorem \ref{thm:ordereduniform}]
We prove the statement in two steps: In the first step, we show that the upper bound $H_n \leq c n/\sqrt{\ln n}(1+o(1))$ holds for $c = 2\sqrt{\ln 4/ \pi}$ both in expectation and with high probability. In the second step, we prove the corresponding lower bound. 

\emph{The upper bound:} Let $k_0 = \log_4n$. The number $H_n$ of distinct fringe subtrees in a uniformly random ordered binary tree with $n$ leaves equals (i) the number of such distinct fringe subtrees of size at most $k_0$ plus (ii) the number of such distinct fringe subtrees of size greater than $k_0$. We upper-bound (i) by the number of all ordered binary trees of size at most $k_0$ (irrespective of their occurrence as fringe subtrees), which is
\begin{align*}
\sum_{k=0}^{k_0-1}C_k = O\left(\frac{4^{k_0}}{k_0^{3/2}}\right)=O\left(\frac{n}{(\ln n)^{3/2}}\right).
\end{align*}
This upper bound holds deterministically. 
Furthermore, we upper-bound (ii) by the total number of fringe subtrees of size greater than $k_0$ occurring in the tree: We apply Lemma \ref{lem:fs_lemma} with $a = 1/\ln 4$ and $\varepsilon = 1/6$ and let $\mathcal{S}_k$ denote the set $\mathcal{T}_k$, such that $s_k = C_{k-1}$, to obtain:
\begin{align*}
\left(\sum_{k_0 < k \leq n^{\varepsilon}}X_{n,k}\right) + Y_{n,\varepsilon} &= \frac{n}{\sqrt{\pi}} \sum_{k_0 < k \leq n^{\varepsilon}} k^{-3/2}\left(1+O(k^{-1})\right) + O(n^{1-\varepsilon/3})\\
&= \frac{2\sqrt{\ln 4}}{\sqrt{\pi}} \cdot \frac{n}{\sqrt{\ln n}} + O(n/(\ln n)^{3/2}),
\end{align*}
in expectation and with high probability as well, as the estimate from Lemma \ref{lem:fs_lemma} (part (4)) holds with high probability simultaneously for all $k$ in the given range. As we have
\begin{align*}
H_n \leq \sum_{k \leq k_0} C_{k-1} + \Big(\sum_{k_0 < k \leq n^{\varepsilon}}X_{n,k}\Big) + Y_{n,\varepsilon},
\end{align*}
 we can combine the two bounds to obtain the upper bound on $H_n$ stated in Theorem \ref{thm:ordereduniform}, both in expectation and with high probability.
 
\emph{The lower bound:} Again, let $k_0 = \log_4 n$ and $\varepsilon =\frac{1}{6}$. From the first part of the proof, we find that the main contribution to the total number of fringe subtrees in a uniformly random binary tree of size $n$ comes from fringe subtrees of sizes $k$ with $k_0 < k \leq n^{\varepsilon}$. Hence, in order to lower-bound the number $H_n$ of distinct fringe subtrees in a uniformly random binary tree with $n$ leaves, we only count distinct fringe subtrees of sizes $k$ with $k_0 < k \leq n^{\varepsilon}$ and show that we did not overcount too much in the first part of the proof by upper-bounding this number by the total number of fringe subtrees of sizes $k$.
To this end, let $X_{n,k}^{(2)}$ denote the number of  pairs of identical fringe subtrees of size $k$ in a uniformly random ordered binary tree of size $n$. Each such pair can be obtained as follows: Take an ordered tree with $n-2k+2$ leaves, pick two leaves, and replace them by the same ordered binary tree of size $k$. The total number of such pairs of identical fringe subtrees of size $k$ is thus
\begin{align*}
C_{n-2k+1}\cdot \binom{n-2k+2}{2}\cdot C_{k-1} = \frac{4^{n-k}}{2 \pi k^{3/2}}(n-2k+1)^{1/2}(1+O(1/k)).
\end{align*}
By dividing by $C_{n-1}$, i.e. the total number of binary trees of size $n$, we thus obtain the expected value: 
\begin{align*}
\Ex(X_{n,k}^{(2)}) =\frac{1}{C_{n-1}}\frac{4^{n-k}}{2 \pi k^{3/2}}(n-2k+1)^{1/2}(1+O(1/k)) =O(4^{-k}n^2k^{-3/2}).
\end{align*}
Thus, we find
\begin{align*}
\sum_{k_0 < k \leq n^{\varepsilon}} \Ex(X_{n,k}^{(2)}) = O\left(n^2\frac{4^{-k_0}}{k_0^{3/2}}\right)=O\left(\frac{n}{(\ln n)^{3/2}}\right).
\end{align*}
If a binary tree of size $k$ occurs $m$ times as a fringe subtree in a uniformly random binary tree of size $n$, it contributes $m - \binom{m}{2}$ to the random variable $X_{n,k}-X_{n,k}^{(2)}$. Since $m - \binom{m}{2} \leq 1$ for all non-negative integers $m$, we find that $X_{n,k}-X_{n,k}^{(2)}$ is a lower bound on the number of distinct fringe subtrees with $k$ leaves. Hence, we have
\begin{align*}
H_n \geq \sum_{k_0 < k \leq n^{\varepsilon}} X_{n,k} - \sum_{k_0 < k \leq n^{\varepsilon}}X_{n,k}^{(2)}.
\end{align*}
The second sum is $O(n/(\ln n)^{3/2})$ in expectation and thus with high probability as well by the Markov inequality. As the first sum is 
$
\frac{2\sqrt{\ln 4}}{\sqrt{\pi}} \cdot \frac{n}{\sqrt{\ln n}} (1+o(1)),
$
both in expectation and with high probability by our estimate from the first part of the proof, 
the statement of Theorem \ref{thm:ordereduniform} follows. \hfill \qedsymbol
\end{proof}

As the main idea of the proof is to split the number of distinct fringe subtrees into the number of distinct fringe subtrees of size at most $k_0$ plus the number of distinct fringe subtrees of size greater than $k_0$ for some suitably chosen integer $k_0$, this type of argument is called a \emph{cut-point argument} and the integer $k_0$ is called the \emph{cut-point} (see \cite{FlajoletGM97}).
This basic technique is applied in several previous papers to similar problems (see for instance  \cite{Devroye98}, \cite{FlajoletGM97}, \cite{RalaivaosaonaW15}, \cite{SeelbachLo18}). Moreover, we remark that the statement of Theorem \ref{thm:ordereduniform} can be easily generalized to simply generated families of trees.

\subsection{Unordered Fringe Subtrees}

In this subsection, we prove Theorem \ref{thm:unordereduniform}. For this, we refine the cut-point argument we applied in the proof of Theorem \ref{thm:ordereduniform}: In particular, for the lower bound on $F_n$, we need a result due to B\'ona and Flajolet \cite{bonaf09} on the number of automorphisms of a uniformly random ordered binary tree. It is stated for random phylogenetic trees in \cite{bonaf09}, but the two probabilistic models are equivalent.

\begin{theo}[\hspace{1sp}\cite{bonaf09}, Theorem 2]\label{thm:cltuniform}
Consider a uniformly random ordered binary tree $T_k$ with $k$ leaves, and let $A_k = \abs{\Aut(T_k)}$ be the cardinality of its automorphism group. The logarithm of this random variable satisfies a central limit theorem: For certain positive constants $\gamma$ and $\sigma_1$, we have
$$\Prob(A_k \leq 2^{\gamma k + \sigma_1 \sqrt{k} x}) \overset{k \to \infty}{\to} \frac{1}{\sqrt{2\pi}} \int_{-\infty}^x e^{-t^2/2}\,dt$$
for every real number $x$. The numerical value of the constant $\gamma$ is $0.2710416936$.
\end{theo}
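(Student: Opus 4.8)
\noindent\emph{Proof strategy.} Since $\abs{\Aut(T_k)} = 2^{\sym(T_k)}$, we have $\log_2 A_k = \sym(T_k)$ exactly, so the claim is equivalent to a central limit theorem for the number of symmetrical nodes of a uniformly random ordered binary tree with $k$ leaves, with $\gamma$ its mean growth rate per leaf and $\sigma_1^2$ its variance growth rate. I would prove this by singularity analysis of a bivariate generating function together with Hwang's quasi-powers theorem. The relevant object is
\begin{align*}
F(z,u) = \sum_{t \in \mathcal{T}} z^{|t|} u^{\sym(t)}, \qquad\text{so that}\qquad \Ex\bigl(u^{\sym(T_k)}\bigr) = \frac{[z^k] F(z,u)}{C_{k-1}}.
\end{align*}
Decomposing a tree into its root and the two root branches $t_L, t_R$, and using that the root is symmetrical precisely when $t_L$ and $t_R$ are isomorphic as \emph{unordered} trees, gives the functional equation
\begin{align*}
F(z,u) = z + F(z,u)^2 + (u-1)\, D(z,u), \qquad D(z,u) = \sum_{\substack{(t_L,t_R) \in \mathcal{T}^2\\ t_L \cong t_R}} z^{|t_L|+|t_R|}\, u^{\sym(t_L)+\sym(t_R)} .
\end{align*}

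Grouping the ordered pairs counted by $D$ by their common underlying unordered tree, and recalling that an unordered binary tree of size $m$ with $s$ symmetrical nodes has exactly $2^{m-1-s}$ ordered representations, one finds $D(z,u) = \tfrac14 V(4z^2,\, u^2/4)$, where $V(x,y) = \sum_{\tilde u \in \mathcal{U}} x^{|\tilde u|} y^{\sym(\tilde u)}$ is the analogous generating function for unordered binary trees. The decisive structural point is that the symmetry correction $(u-1) D(z,u)$ is only an \emph{analytic perturbation} near $z = \tfrac14$: for $|y| \le 1$ the coefficient $[x^m] V(x,y)$ has modulus at most $|\mathcal{U}_m| = W_m \sim A\, m^{-3/2} b^m$, so $V(x,y)$ converges for $|x| < 1/b$ and $D(z,u)$ for $|z| < 1/(2\sqrt{b})$; and $1/(2\sqrt{b}) > \tfrac14$ because $b \approx 2.4833 < 4$. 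Hence $(u-1)D(z,u)$ is jointly analytic in a complex neighbourhood of $(\tfrac14,1)$ and vanishes identically at $u=1$ (consistently with $F(z,1) = \tfrac12(1-\sqrt{1-4z})$).

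Solving the quadratic, $F(z,u) = \tfrac12\bigl(1 - \sqrt{P(z,u)}\bigr)$ with $P(z,u) = 1 - 4z - 4(u-1) D(z,u)$, the branch being fixed by $F(0,u) = 0$. Here $P$ is analytic near $(\tfrac14,1)$ with $P(\tfrac14,1) = 0$ and $\partial_z P(\tfrac14,1) = -4 \ne 0$, so by the implicit function theorem $P(\cdot,u)$ has, for $u$ in a neighbourhood of $1$, a unique simple zero $z = \rho(u)$ near $\tfrac14$, with $\rho$ analytic and $\rho(1) = \tfrac14$. Thus $F(\cdot,u)$ has a square-root singularity at $\rho(u)$, uniformly for $u$ near $1$, and the transfer theorem (in the form uniform over analytically varying square-root singularities) gives $[z^k] F(z,u) = B(u)\, \rho(u)^{-k} k^{-3/2}(1 + o(1))$ with $B$ analytic, nonvanishing, $B(1) = (4\sqrt{\pi})^{-1}$. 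Dividing by $C_{k-1} \sim 4^k(4\sqrt{\pi}\,k^{3/2})^{-1}$ yields
\begin{align*}
\Ex\bigl(u^{\sym(T_k)}\bigr) = \frac{B(u)}{B(1)}\left(\frac{1}{4\rho(u)}\right)^{k}\bigl(1 + o(1)\bigr)
\end{align*}
uniformly for $u$ near $1$, a quasi-power with base $A(u) = 1/(4\rho(u))$, $A(1) = 1$. Hwang's quasi-powers theorem then gives the central limit theorem, with $\gamma = -\rho'(1)/\rho(1) = 4 D(\tfrac14,1) = \sum_{\tilde u \in \mathcal{U}} 4^{-|\tilde u| - \sym(\tilde u)}$ (the leaf contributing $\tfrac14$ and the two-leaf tree $4^{-3}$, the sum evaluating to $\approx 0.2710416936$) and $\sigma_1^2$ obtained from the standard formula in $\rho(1), \rho'(1), \rho''(1)$.

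The step I expect to be the real obstacle is the derivation and analytic control of the correction term $D$: one must recognise root-symmetry as an unordered-isomorphism event, re-express the corresponding count through the P\'olya-type generating function $V$ of unordered binary trees, and then exploit that unordered binary trees are exponentially scarcer than ordered ones ($b < 4$) to conclude that $D$ moves the dominant singularity of $F$ only analytically, so that $\rho(u)$ is well defined and analytic near $u = 1$. Everything after that is routine analytic-combinatorics machinery. A minor additional point is the non-degeneracy $\sigma_1 > 0$, which holds because $\sym(T_k)$ is not asymptotically concentrated --- visible already from a crude direct variance lower bound, or from the strict non-linearity of $u \mapsto \log\rho(u)$ at $u = 1$. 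Finally, the same conclusion can be reached through general central limit theorems for additive tree functionals with a toll function (here the toll is the indicator that the root of a fringe subtree is symmetrical), whose smallness hypothesis is precisely the estimate $1/(2\sqrt{b}) > \tfrac14$ recorded above.
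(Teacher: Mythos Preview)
The paper does not prove this theorem; it merely quotes it from B\'ona and Flajolet \cite{bonaf09}, so there is no proof in the paper to compare against. Your sketch is correct and is in fact the argument of the original source: one sets up the bivariate generating function $F(z,u)=\sum_{t}z^{|t|}u^{\sym(t)}$, derives the quadratic $F=z+F^2+(u-1)D$ with the symmetry correction $D(z,u)=\tfrac14 V(4z^2,u^2/4)$, observes that $D$ is analytic past $z=\tfrac14$ because $1/(2\sqrt{b})>\tfrac14$ (equivalently, unordered binary trees are exponentially rarer than ordered ones), and then applies singularity perturbation together with Hwang's quasi-powers theorem. Your identification of $\gamma=4D(\tfrac14,1)=\sum_{\tilde u\in\mathcal U}4^{-|\tilde u|-\sym(\tilde u)}$ is also the formula B\'ona and Flajolet obtain, and the closing remark that one could alternatively invoke a general CLT for additive functionals with small toll is a legitimate shortcut that yields the same constants.
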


With Theorem \ref{thm:cltuniform}, we are able to upper-bound the probability that two fringe subtrees of the same size are isomorphic in our proof of Theorem \ref{thm:unordereduniform}:

\begin{proof}[Proof of Theorem \ref{thm:unordereduniform}]
We prove the statement in two steps: First, we show that the upper bound on $F_n$ stated in Theorem \ref{thm:unordereduniform}  holds both in expectation and with high probability, then we prove the respective lower bound. 

\emph{The upper bound:} The proof for the upper bound in Theorem \ref{thm:unordereduniform} exactly matches the first part of the proof of Theorem \ref{thm:ordereduniform}, except that we choose a different cut-point: Let $k_0 = \log_b n$, where $b \approx2.4832535362$ is the constant in the asymptotic formula~\eqref{eq:wedder} for the Wedderburn-Etherington numbers. We then find 
\begin{align*}
F_n \leq \sum_{k < k_0} W_k + \Big( \sum_{k_0 \leq k \leq n^{\epsilon}} X_{n,k}  \Big) + Y_{n,\epsilon} = \frac{2\sqrt{\ln b}}{\sqrt{\pi}} \cdot \frac{n}{\sqrt{\ln n}} + O(n (\ln n)^{-3/2}),
\end{align*}
both in expectation and with high probability, where the estimates for $X_{n,k}$ and $Y_{n, \varepsilon}$ follow again from Lemma \ref{lem:fs_lemma}. We have $2 \sqrt{\ln b}/\sqrt{\pi} \approx 1.0761505454$.

\emph{The lower bound:} 
As a consequence of Theorem \ref{thm:cltuniform}, the probability that the cardinality of the automorphism group of a uniformly random binary tree $T_k$ of size $k$ satisfies $\abs{\Aut(T_k)} \leq 2^{\gamma k - k^{3/4}}$ tends to $0$ as $k \to \infty$. We define $\SSet_k$ as the set of ordered trees with $k$ leaves that do not satisfy this inequality, so that $s_k = |\SSet_k| = C_{k-1} (1+ o(1))$. Our lower bound is based on counting only fringe subtrees in $\SSet_k$ for suitable $k$. The reason for this choice is that we have an upper bound on the number of ordered binary trees in the same isomorphism class for every tree in $\SSet_k$. Recall that the number of possible ordered representations of an unordered binary tree $t$ with $k$ leaves is given by
$2^{k-1}/\abs{\Aut(t)}$
by the orbit-stabiliser theorem. Hence, the number of ordered binary trees in the same isomorphism class as a tree $t \in \SSet_k$ is bounded above by $2^{k-1-\gamma k + k^{3/4}}$.

Now set $k_1 = \frac{1+\delta}{1+\gamma} \log_2 n$ for some positive constant $\delta < \frac23$, and consider only fringe subtrees that belong to $\SSet_k$, where $k_1 \leq k \leq n^{\delta/2}$. By Lemma~\ref{lem:fs_lemma}, the number of such fringe subtrees in a random ordered binary tree with $n$ leaves is 
$$s_k 4^{1-k}n (1+O (k/n + s_k^{-1/2}2^{k} n^{(\delta-1)/2}) )$$
both in expectation and with high probability. Since $s_k = C_{k-1}(1+o(1))$,
the number of fringe subtrees that belong to $\mathcal{S}_k$ in a random ordered binary tree of size $n$ becomes $\frac{n}{\sqrt{\pi k^3}} (1+o(1))$. We show that most of these trees are the only representatives of their isomorphism classes as fringe subtrees. To this end, we consider all fringe subtrees in $\SSet_k$ for some $k$ that satisfies $k_1 \leq k \leq n^{\delta/2}$. Let the sizes of the isomorphism classes of trees in $\SSet_k$ be $r_1,r_2,\ldots,r_{\ell}$, so that $r_1+r_2+\cdots+r_{\ell} = s_k$. By definition of $\SSet_k$, we have $r_i \leq 2^{k-1-\gamma k + k^{3/4}}$ for every $i$. Let us condition on the event that their number $X_{n,k}$ is equal to $N$ for some $N \leq n$. Each of these $N$ fringe subtrees $S_1,S_2,\ldots,S_N$ follows a uniform distribution among the elements of $\SSet_k$, so the probability of being in an isomorphism class with $r_i$ elements is $r_i/s_k$. Moreover, the $N$ fringe subtrees are also all independent. Let $X_{n,k}^{(2)}$ be the number of pairs of isomorphic trees among the fringe subtrees with $k$ leaves. We have
\begin{align*}
\Ex \big(X_{n,k}^{(2)} | X_{n,k} = N \big) &= \binom{N}{2} \sum_i \Big( \frac{r_i}{s_k} \Big)^2 \leq \frac{n^2}{2s_k^2} \sum_i r_i^2 \leq  \frac{n^2}{s_k} 2^{k-2-\gamma k + k^{3/4}}.
\end{align*}
Since this holds for all $N$, the law of total expectation yields
$$\Ex \big(X_{n,k}^{(2)}\big) \leq \frac{n^2}{s_k} 2^{k-2-\gamma k + k^{3/4}} = \sqrt{\pi}n^2 k^{3/2} 2^{-k- \gamma k + k^{3/4}} (1+o(1)).$$
Since $k \geq k_1 = \frac{1+\delta}{1+\gamma} \log_2 n$, we find that
$$\Ex \big(X_{n,k}^{(2)}\big) \leq n^2 2^{-(1+\gamma)k + O(k^{3/4})} \leq n^{1-\delta} \exp \big(O( (\ln n)^{3/4} )\big).$$
Thus
$$\sum_{k_1 \leq k \leq n^{\delta/2}} \Ex\big(X_{n,k}^{(2)}\big) \leq n^{1-\delta/2} \exp \big( O((\ln n)^{3/4}) \big) = o(n/\sqrt{\ln n}).$$
As in the previous proof, we see that $X_{n,k} - X_{n,k}^{(2)}$ is a lower bound on the number of non-isomorphic fringe subtrees with $k$ leaves. This gives us
$$F_n \geq \sum_{k_1 \leq k \leq n^{\delta/2}} X_{n,k} - \sum_{k_1 \leq k \leq n^{\delta/2}} X_{n,k}^{(2)}.$$
The second sum is negligible since it is $o(n/\sqrt{\ln n})$ in expectation and thus also with high probability by the Markov inequality. For the first sum, a calculation similar to that for the upper bound shows that it is
$$\frac{2\sqrt{(1+\gamma)\ln 2}}{\sqrt{\pi(1+\delta)}} \cdot \frac{n}{\sqrt{\ln n}} (1+o(1)),$$
both in expectation and with high probability. Since $\delta$ is arbitrary, we can choose any constant smaller than $\frac{2\sqrt{(1+\gamma)\ln 2}}{\sqrt{\pi}} \approx 1.0591261434$ for $c_1$. \hfill \qedsymbol

\end{proof}

\section{Fringe Subtrees in Random Binary Search Trees}
In this section, we prove our results presented in Theorem \ref{thm:unorderedbst} and Theorem \ref{thm:orderedbst} on the number of distinct, respectively, non-isomorphic fringe subtrees in a random binary search tree.
In order to show the respective lower bounds of Theorem \ref{thm:unorderedbst} and Theorem \ref{thm:orderedbst}, we need two theorems similar to Theorem \ref{thm:cltuniform}: The first one shows that the logarithm of the random variable $B_k = P_{\bst}(T_k)^{-1}$, where $T_k$ denotes a random binary search tree of size $k$, satisfies a central limit theorem and is needed to estimate the probability that two fringe subtrees in a random binary search tree are identical. The second one transfers the statement of Theorem \ref{thm:cltuniform} from uniformly random binary trees to random binary search trees and is needed in order to estimate the probability that two fringe subtrees in a random binary search tree are isomorphic.
The first of these two central limit theorems is shown in \cite{Fill96}:

\begin{theo}[\hspace{1sp}\cite{Fill96}, Theorem 4.1]\label{thm:cltPbst}
Consider a random binary search tree $T_k$ with $k$ leaves, and let $B_k = P_{\bst}(T_k)^{-1} $.
The logarithm of this random variable satisfies a central limit theorem: For certain positive constants $\mu$ and $\sigma_2$, we have
\begin{align*}
\mathbb{P}\left(B_k \leq 2^{\mu k + \sigma_2 \sqrt{k}x}\right) \overset{k \to \infty}{\to} \frac{1}{\sqrt{2\pi}} \int_{-\infty}^x e^{-t^2/2}\,dt
\end{align*}
for every real number $x$. The numerical value of the constant $\mu$ is 
\begin{align*}
\mu = \sum_{k=1}^{\infty}\frac{2\log_2 k}{(k+1)(k+2)} \approx 1.7363771368.
\end{align*}

\end{theo}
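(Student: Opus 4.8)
The plan is to set $L_k = \log_2 B_k = -\log_2 P_{\bst}(T_k) = \sum_{v\in T_k,\,|T_k(v)|>1}\log_2(|T_k(v)|-1)$ and exploit the recursive structure of random binary search trees. If $T_k$ has $k$ leaves (hence $k-1$ internal nodes and weight $\frac{1}{k-1}\cdot(\text{left weight})\cdot(\text{right weight})$), then the subtree sizes at the root split as $(I,k-I)$ where $I$ is uniform on $\{1,\dots,k-1\}$, and conditionally on this split the two subtrees are independent random binary search trees. This gives the distributional recursion
\begin{align*}
L_k \overset{d}{=} L_I + L_{k-I}^{*} + \log_2(k-1),
\end{align*}
with $L_I, L_{k-I}^{*}$ independent copies (for $k\ge 2$) and $L_1 = 0$. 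First I would verify that the ``toll function'' $\log_2(k-1)$ is exactly of the type handled by the general contraction/transfer theorems for such recurrences — in particular the additive functionals framework of R\"osler, Neininger, and Fill--Kapur (or the singularity-analysis approach of Hwang--Neininger). The expectation satisfies $\Ex(L_k) = \mu k + O(\log k)$ with $\mu = \sum_{j\ge 1}\frac{2\log_2 j}{(j+1)(j+2)}$, which comes out of solving the linear recurrence $\ell_k = \log_2(k-1) + \frac{2}{k-1}\sum_{j=1}^{k-1}\ell_j$ by the standard difference trick, and the stated series for $\mu$ is its leading coefficient; similarly $\Var(L_k) = \sigma_2^2 k + O(\log^2 k)$ for some $\sigma_2 > 0$, with $\sigma_2^2$ expressible through another (more involved) series.

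Concretely I would carry out the following steps. (1) Establish the mean and variance asymptotics above from the recurrence, which is routine but needs care with the error terms. (2) Normalize: set $\tilde L_k = (L_k - \mu k)/\sqrt{k}$ and check that the recursion for $\tilde L_k$, after rescaling, converges (in the Zolotarev or Wasserstein metric) to the fixed-point equation whose only solution with the right first two moments is the normal distribution — here one uses that when $I$ is uniform on $\{1,\dots,k-1\}$, the ratio $I/k$ converges to a uniform$[0,1]$ random variable $U$, and the limiting equation is $Z \overset{d}{=} \sqrt{U}\,Z + \sqrt{1-U}\,Z^{*}$ (independent copies), whose centered square-integrable solutions are exactly the centered Gaussians since $\Ex(U) + \Ex(1-U) = 1$ is the critical case. (3) Deduce the CLT in the stated form $\Prob(B_k \le 2^{\mu k + \sigma_2\sqrt{k}\,x}) \to \Phi(x)$, and finally evaluate $\mu$ numerically to confirm $\mu \approx 1.7363771368$. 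Since this statement is quoted verbatim from \cite{Fill96}, I would actually just cite it; the sketch above is the route Fill takes (via the method of moments / contraction on the recursive structure).

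The main obstacle is \textbf{the critical (non-contractive) nature of the limiting fixed-point equation}: because $\Ex(\sqrt{U})^2 + \Ex(\sqrt{1-U})^2$ does \emph{not} drop below $1$ in the relevant metric — the coefficients $(\sqrt U, \sqrt{1-U})$ give a map that is only borderline contractive — one cannot invoke the contraction method in its simplest form to get a unique fixed point and hence must argue more delicately, either by the method of moments (showing every moment of $\tilde L_k$ converges to the corresponding Gaussian moment, which requires controlling the error terms $O(\log k)$ and $O(\log^2 k)$ through all orders) or by a refined transfer theorem specifically designed for additive tree functionals with logarithmic toll. Keeping the $O$-terms in the mean and variance under control so that they do not pollute the normalization at scale $\sqrt{k}$ is where the real work lies; everything else is bookkeeping on the recurrence.
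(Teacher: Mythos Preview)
Your proposal is sound, and you rightly observe that the result is quoted from \cite{Fill96} and could simply be cited. The paper does exactly that, but it also notes an alternative derivation that is shorter than the route you sketch: immediately after stating the Holmgren--Janson transfer theorem (Theorem~\ref{holmgrenjanson}), the paper points out that with $f(t)=\log_2(|t|-1)$ for $|t|>1$ and $f(t)=0$ otherwise one has $\mathcal F(t)=\log_2 P_{\bst}(t)^{-1}$, so Theorem~\ref{thm:cltPbst} is the special case of Theorem~\ref{holmgrenjanson} for this $f$. Because $f(T_k)=\log_2(k-1)$ is \emph{deterministic} (it depends only on $|T_k|$), one has $\Var(f(T_k))=0$ and the three hypotheses of Theorem~\ref{holmgrenjanson} reduce to $\sum_{k\ge 2}(\log_2(k-1))^2/k^2<\infty$, which is immediate; the formula for $\mu$ then drops out of the expression for $\nu$ in that theorem after a shift of index.

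Your direct attack via the distributional recursion $L_k\overset{d}{=}L_I+L_{k-I}^*+\log_2(k-1)$ is the classical path and essentially what underlies Fill's original argument; it is correct and would also yield the result. The contrast is instructive: the Holmgren--Janson route completely sidesteps the critical-case obstacle you identify, because all the hard work (handling the borderline contraction $Z\overset{d}{=}\sqrt U\,Z+\sqrt{1-U}\,Z^*$) has been absorbed once and for all into their general theorem for additive functionals. Your approach is more self-contained and would generalise to tolls that depend on tree shape rather than just size, but here the black-box application is strictly less work. One small point: whichever route is taken, the positivity $\sigma_2>0$ still has to be checked separately (Theorem~\ref{holmgrenjanson} only gives the CLT conditionally on $\sigma\neq 0$); this is known from Fill's computation and is not re-verified in the paper.
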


The second of these two central limit theorems follows from a general theorem devised by Holmgren and Janson \cite{holmgrenj15}: Let $f: \mathcal{T} \rightarrow \mathbb{R}$ denote a function mapping an ordered binary tree to a real number. Moreover, given such a mapping $f$, define $\mathcal{F}: \mathcal{T} \rightarrow \mathbb{R}$ by
\begin{align*}
\mathcal{F}(t) = \sum_{v \in t} f(t(v)).
\end{align*}
The theorem by Holmgren and Janson states:
\begin{theo}[\hspace{1sp}\cite{holmgrenj15}, Theorem 1.14]\label{holmgrenjanson}
Let $T_k$ be a random binary search tree of size $k$. If
\begin{align*}
\sum_{k=1}^{\infty} \frac{\Var(f(T_k))^{1/2}}{k^{3/2}} < \infty, \quad
\lim_{k \rightarrow \infty} \frac{\Var(f(T_k))}{k} = 0 \quad \text{ and } \quad
&\sum_{k=1}^{\infty}\frac{\Ex(f(T_k))^2}{k^2}< \infty,
\end{align*}
then for certain constants $\nu$ and $\sigma \geq 0$, we have
$$\Ex(\mathcal{F}(T_k)) \sim \nu k \text{ and } \Var(\mathcal{F}(T_k)) \sim \sigma^2 k.$$
Moreover, if $\sigma \neq 0$, then
\begin{align*}
\mathbb{P}(\mathcal{F}(T_k) \leq  \nu k + \sigma \sqrt{k} x) \overset{k \to \infty}{\to}\frac{1}{\sqrt{2\pi}} \int_{-\infty}^x e^{-t^2/2}\,dt
\end{align*}
 for every real number $x$. In particular, we have
\begin{align*}
\nu =\sum_{k=1}^{\infty}\frac{2}{k(k+1)}\Ex(f(T_k)).
\end{align*}
\end{theo}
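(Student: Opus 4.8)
The plan is to exploit the divide-and-conquer structure of the binary search tree: splitting $T_n$ at its root, the two root branches are independent random binary search trees whose numbers of leaves are $J_n$ and $n-J_n$ with $J_n$ uniform on $\{1,\dots,n-1\}$, so the additive functional satisfies the distributional recursion $\mathcal{F}(T_n)\overset{d}{=}f(T_n)+\mathcal{F}(T'_{J_n})+\mathcal{F}(T''_{n-J_n})$, where $T',T''$ are independent copies of the whole process, independent of $J_n$. Alongside this I would use the classical fact that, conditioned on having $k$ leaves, a fringe subtree of $T_n$ is itself distributed as $T_k$. The mean is then immediate: summing $f$ over all nodes gives $\Ex(\mathcal{F}(T_n))=\sum_{k\le n}\Ex(Z_{n,k})\,\Ex(f(T_k))$ with $\Ex(Z_{n,k})=\frac{2n}{k(k+1)}$ for $k<n$ as used above, and dividing by $n$ and letting $n\to\infty$ — the interchange of limit and sum being legitimate because the third hypothesis together with $\sum_k k^{-2}<\infty$ and Cauchy--Schwarz make $\sum_k\frac{|\Ex(f(T_k))|}{k(k+1)}$ finite — yields $\Ex(\mathcal{F}(T_n))=\nu n+o(n)$ with $\nu$ exactly as stated, and a slightly finer look at the tail gives $\Ex(\mathcal{F}(T_m))=\nu m+o(m)$.

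For the variance I would expand $\Ex(\mathcal{F}(T_n)^2)$ as a double sum over ordered pairs of nodes and separate it into diagonal pairs, ancestor--descendant pairs, and incomparable pairs, inserting the known joint asymptotics of pairs of fringe subtrees (asymptotic independence for incomparable pairs, with an explicit correction along ancestor chains); equivalently, one feeds the recursion into the standard second-moment recurrence for additive binary-search-tree functionals. The first two hypotheses on $f$ are precisely what forces the resulting series to converge and the error terms to be $o(n)$, yielding $\Var(\mathcal{F}(T_n))=\sigma^2 n+o(n)$ for some $\sigma^2\ge0$ whose value is read off from this computation.

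Assuming $\sigma\ne0$, the central limit theorem would be proved by the contraction method. Setting $X_n=(\mathcal{F}(T_n)-\Ex\mathcal{F}(T_n))/\sqrt n$, the recursion rewrites as $X_n\overset{d}{=}\sqrt{J_n/n}\,X'_{J_n}+\sqrt{(n-J_n)/n}\,X''_{n-J_n}+R_n$ with $R_n=n^{-1/2}(f(T_n)+b_{J_n}+b_{n-J_n}-b_n)$ and $b_m=\Ex\mathcal{F}(T_m)$. Since $b_m=\nu m+o(m)$ and $\nu J_n+\nu(n-J_n)-\nu n=0$, the mean-correction part of $R_n$ is of lower order, the toll fluctuation is controlled by $\Var(f(T_n))/n\to0$ (hypothesis two), and hypothesis one, $\sum_k\Var(f(T_k))^{1/2}k^{-3/2}<\infty$, supplies exactly the summability needed to bound the remainders accumulated over the levels of the recursion. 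The general contraction-method transfer theorem of Neininger and R\"uschendorf, applied in the minimal $\ell_2$-metric, then identifies the limit of $X_n$ as the unique fixed point of $X\overset{d}{=}\sqrt{U}\,X'+\sqrt{1-U}\,X''$, with $U$ uniform on $(0,1)$ and $X',X''$ independent copies of $X$, independent of $U$; a centered Gaussian solves this equation (conditionally on $U$ the right-hand side is again centered Gaussian with unchanged variance) and is its only solution with the prescribed first two moments, so $X_n\to\mathcal{N}(0,\sigma^2)$.

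I expect the real work to be in the two middle steps rather than in the formal contraction estimate: establishing the exact linear variance asymptotics — in particular showing that the contribution of correlated, ancestor-related pairs of fringe subtrees sums to a finite limit under these comparatively weak hypotheses — and verifying in the contraction step that $b_{J_n}+b_{n-J_n}-b_n$ together with $f(T_n)-\Ex f(T_n)$ contributes a remainder vanishing in the chosen probability metric. The three summability conditions in the statement are tailored precisely so that these estimates succeed, and tracking them carefully is the technical heart of the argument.
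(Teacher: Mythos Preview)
This theorem is not proved in the paper at all: it is quoted from Holmgren and Janson \cite{holmgrenj15} (their Theorem~1.14) and used as a black box to derive Theorem~\ref{thm:cltbst}. There is therefore no proof in the paper to compare your proposal against.

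For what it is worth, your outline is a plausible independent route. The mean computation via $\Ex(\mathcal{F}(T_n))=\sum_k \Ex(Z_{n,k})\Ex(f(T_k))$ is exactly right and gives the formula for $\nu$ directly. The contraction-method argument you sketch (Neininger--R\"uschendorf transfer in the $\ell_2$ metric, with the fixed-point equation $X\overset{d}{=}\sqrt{U}X'+\sqrt{1-U}X''$ forcing a Gaussian limit) is a standard alternative way to obtain central limit theorems for additive functionals on binary search trees, and the three hypotheses are indeed of the right shape to control the toll term and the centering remainder. This is, however, not the approach Holmgren and Janson themselves take in \cite{holmgrenj15}: their proof goes through a representation of the functional as a sum over records in a sequence of i.i.d.\ uniforms and a direct CLT for that representation, rather than via the contraction method. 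Either way, none of this appears in the present paper, which simply invokes the result.
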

Note that in \cite{holmgrenj15}, the equivalent binary search model is considered that allows binary trees to have unary nodes, so that the index of summation has to be shifted in the sum defining $\nu$. Moreover, note that if we set $f(t) = \log_2(|t|-1)$ for $|t|>1$ and $f(t) =0$ otherwise, we have
\begin{align*}
\mathcal{F}(t) = \sum_{v \in t} f(t(v)) = \sum_{v \in t \atop |t(v)| >1} \log_2(|t(v)|-1) = \log_2 (P_{\bst}(t)^{-1}),
\end{align*}
by definition of $P_{\bst}$ in (\ref{def:pbst}), and thus Theorem \ref{thm:cltPbst} follows as a special case of Theorem \ref{holmgrenjanson}. This special case is also considered in Example 8.13 of \cite{holmgrenj15}.\\

As our main application of Theorem \ref{holmgrenjanson}, we transfer the statement of Theorem~\ref{thm:cltuniform} from uniformly random binary trees to random binary search trees, that is, we show that if the random number $A_k = \abs{\Aut(T_k)}$ denotes the size of the automorphism group of a random binary search tree $T_k$ with $k$ leaves, then the logarithm of this random variable satisfies a central limit theorem as well. For this, we define the function $f:\mathcal{T} \rightarrow \mathbb{R}$ in Theorem \ref{holmgrenjanson} by
\begin{align*}
f(t) = \begin{cases} 1 \quad &\text{if the root of } t \text{ is a symmetrical node,}\\
0 \quad &\text{otherwise.}\end{cases}
\end{align*} 
We thus have
\begin{align*}
\mathcal{F}(t) = \sum_{v \in t} f(t(v)) = \sym(t),
\end{align*}
that is, $\mathcal{F}(t)$ evaluates to the number of symmetrical nodes in $t$. Recall that $2^{\sym(t)}$ equals the size of the automorphism group $\Aut (t)$ of $t$. It is not difficult to check that $f$ satisfies the conditions of Theorem \ref{holmgrenjanson}: As $f(t) \in \{0,1\}$ for every $t \in \mathcal{T}$, we have $\Ex(f(T_k)^2) = \Ex(f(T_k)) \in [0,1]$ and thus $\Var(f(T_k)) \in [0,1]$ as well, so that the assumptions of Theorem \ref{holmgrenjanson} are satisfied. In order to determine the corresponding value $\nu$, we start with estimating the expectation $\Ex(f(T_k))$: 

If $k$ is odd, then $\Ex(f(T_k))=0$, as the fringe subtrees rooted at the root's children cannot be of the same size in this case and thus cannot be isomorphic. 
If $k$ is even, then $\Ex(f(T_k))$ equals the probability that these two subtrees are of the same size $\frac{k}{2}$ (which is $\frac{1}{k-1}$) times the probability that these two subtrees of size $\frac{k}{2}$ are isomorphic. 

In order to estimate the latter probability, let $P_k^r$ for positive integers $k$ and $r$ denote the probability that $2^r$ random binary search trees of size $k$ are isomorphic, and let $\delta(k) = 1$ if $k$ is even and $\delta(k) = 0$ otherwise. We find that $P_k^r$ satisfies the following recurrence relation:
\begin{align*}
P_k^r = \smashoperator{\sum_{i=1}^{\lfloor\frac{k-1}{2}\rfloor}} \Big(\frac{2}{k-1}\Big)^{\!2^r}\!P_i^rP_{k-i}^r + \delta(k) \Big(\frac{1}{k-1}\Big)^{\!2^r}\!\Big(2^{2^r-1}\big(P_{k/2}^r\big)^2\!-\!\Big(2^{2^r-1}-1\Big)P_{k/2}^{r+1}\Big),
\end{align*}
with $P_k^r = 1$ for $k \in \{1,2,3\}$ and every positive integer $r$. To see that this recurrence relation holds, first consider the case that $k$ is odd: If all the $2^r$ trees are isomorphic, then the respective sizes of the fringe subtrees rooted at the root nodes' children must coincide, that is, there are integers $i$ and $k-i$ with $1 \leq i \leq \lfloor \frac{k-1}{2} \rfloor$, such that for each of the $2^r$ trees, one of those subtrees is of size $i$ while the other is of size $k-i$. This holds with probability $(2/(k-1))^{2^r}$. Moreover, all of the $2^r$ subtrees of size $i$ (respectively, $k-i$) have to be isomorphic, which holds with probability $P_i^r$ (respectively, $P_{k-i}^r$).

If $k$ is even, we furthermore have to consider the case that $i = \frac{k}{2}$, which holds with probability $(1/(k-1))^{2^r}$: In this case pick the first of the $2^r$ trees and let $t_1$ (respectively, $t_2$) denote the fringe subtree of size $\frac{k}{2}$ rooted at the root node's left (respectively, right) child. For all of the other $2^r-1$ trees, one of the fringe subtrees rooted at the root node's children has to be isomorphic to $t_1$, while the other has to be isomorphic to $t_2$: This holds with probability $(P_{k/2}^r)^2$. Moreover, for each of those $2^r-1$ many trees, we can choose whether the subtree rooted at the root's left child or right child is isomorphic to $t_1$, which gives us $2^{2^{r}-1}$ many possibilities.
However, in the case that 
$t_1$ is isomorphic to $t_2$ as well (which means that all the $2^{r+1}$ subtrees are isomorphic, which holds with probability $P_{k/2}^{r+1}$), this means some overcounting, which is taken into account by the final term. Thus, the recursion for $P_k^r$ follows.
We find for a random binary search tree $T_k$ of size $k$:
\begin{align*}
\Ex(f(T_k)) = \begin{cases} \frac{1}{k-1}P^1_{k/2} \quad &\text{if } k \text{ is even,}\\
0 \quad &\text{otherwise.} \end{cases} 
\end{align*}
Thus, we have
\begin{align*}
\nu = \sum_{k=1}^{\infty}\frac{2}{k(k+1)}\Ex(f(T_k)) = \sum_{k=1}^{\infty} \frac{P_k^1}{k(2k+1)(2k-1)} \approx 0.3795493473,
\end{align*}
where the numerical value for $\nu$ can be determined using the recurrence relation for $P_k^r$. We remark that the constant $\sigma^2$ can also be evaluated numerically. It is approximately $0.115$, thus in particular not $0$, but we do not need its precise value.  The following theorem now follows from Theorem \ref{holmgrenjanson}:

\begin{theo}\label{thm:cltbst}
Consider a random binary search tree $T_k$ with $k$ leaves, and let $A_k =\abs{\Aut (T_k)} $ be the cardinality of its automorphism group. The logarithm of this random variable satisfies a central limit theorem: for certain positive constants $\nu$ and $\sigma_3$, we have
\begin{align*}
\mathbb{P}(A_k \leq 2^{\nu k + \sigma_3 \sqrt{k} x})  \overset{k \to \infty}{\to} \frac{1}{\sqrt{2\pi}} \int_{-\infty}^x e^{-t^2/2}\,dt
\end{align*}
for every real number $x$. The numerical value of $\nu$ is $\nu \approx 0.3795493473$.
\end{theo}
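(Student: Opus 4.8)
The plan is to deduce the statement directly from the general central limit theorem of Holmgren and Janson (Theorem~\ref{holmgrenjanson}), applied to the additive functional $\mathcal{F}$ induced by the indicator $f$ that records whether the root of a tree is a symmetrical node. As already observed above, this choice gives $\mathcal{F}(t) = \sym(t)$, and since $\abs{\Aut(t)} = 2^{\sym(t)}$ we have $\log_2 A_k = \mathcal{F}(T_k)$ for a random binary search tree $T_k$ of size $k$. Hence, once Theorem~\ref{holmgrenjanson} is seen to apply, its conclusion is literally the asserted central limit theorem for $\log_2 A_k$, with $\nu$ as in the Holmgren--Janson formula and $\sigma_3 = \sigma$; so the whole argument reduces to checking the hypotheses, identifying $\nu$, and verifying non-degeneracy.

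The first step is to check the three conditions of Theorem~\ref{holmgrenjanson}, which is immediate: since $f$ takes values in $\{0,1\}$, we have $\Ex(f(T_k)^2) = \Ex(f(T_k)) \in [0,1]$ and therefore $\Var(f(T_k)) \le \tfrac14$ uniformly in $k$, so $\sum_k \Var(f(T_k))^{1/2} k^{-3/2}$ and $\sum_k \Ex(f(T_k))^2 k^{-2}$ converge and $\Var(f(T_k))/k \to 0$. The second step is to pin down $\nu = \sum_{k \ge 1} \tfrac{2}{k(k+1)} \Ex(f(T_k))$: here $\Ex(f(T_k)) = 0$ for odd $k$, and for even $k$ it equals $\tfrac{1}{k-1}$ (the probability that the two root subtrees have common size $k/2$) times the probability $P_{k/2}^1$ that two random binary search trees of size $k/2$ are isomorphic, which is accessible through the recurrence for $P_k^r$ established above and yields the numerical value $\nu \approx 0.3795493473$.

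The only genuinely delicate point is the non-degeneracy requirement $\sigma \neq 0$, which is what upgrades the first- and second-order asymptotics $\Ex(\mathcal{F}(T_k)) \sim \nu k$, $\Var(\mathcal{F}(T_k)) \sim \sigma^2 k$ to an actual central limit theorem in Theorem~\ref{holmgrenjanson}. I would establish this by evaluating the Holmgren--Janson expression for $\sigma^2$ numerically, obtaining $\sigma^2 \approx 0.115 > 0$; alternatively, one can argue qualitatively that $\sym(T_k)$ is not asymptotically concentrated at its mean — for instance by exhibiting local modifications of the tree near the root that change $\sym$ and occur with non-vanishing probability — which already forces $\sigma^2 > 0$. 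This is the step I expect to require the most care, since the remaining ingredients are either trivial (the verification of the hypotheses) or already carried out (the recurrence and the value of $\nu$). With $\sigma > 0$ in hand, setting $\sigma_3 = \sigma$ completes the proof.
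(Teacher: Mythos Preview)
Your proposal is correct and follows essentially the same route as the paper: apply the Holmgren--Janson theorem to the $\{0,1\}$-valued toll $f$ indicating a symmetrical root, verify the hypotheses trivially from boundedness, compute $\nu$ via $\Ex(f(T_k)) = \tfrac{1}{k-1} P_{k/2}^1$ for even $k$ and the recurrence for $P_k^r$, and confirm $\sigma^2 \approx 0.115 > 0$ numerically. The only addition on your side is the alternative qualitative argument for non-degeneracy, which the paper does not include.
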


\subsection{Ordered Fringe Subtrees in Random Binary Search Trees}

We are now able to prove Theorem \ref{thm:orderedbst}:

\begin{proof}[Proof of Theorem \ref{thm:orderedbst}]
\emph{The upper bound:} Let $k_0 = \log_4 n $. We upper-bound the number $J_n$ of distinct fringe subtrees in a random binary search tree with $n$ leaves as follows: The number of distinct fringe subtrees with fewer than $k_0$ leaves is trivially bounded from above by the number of all binary trees of size at most $k_0$ (irrespective of their occurrence as fringe subtrees), which is
\begin{align*}
\sum_{k<k_0} C_{k-1} = O\left(\frac{4^{k_0}}{k_0^{3/2}}\right) = O\left(\frac{n}{(\ln n)^{3/2}}\right).
\end{align*}
This upper bound holds deterministically. The number of distinct fringe subtrees with at least $k_0$ leaves is upper-bounded by the total number of fringe subtrees with at least $k_0$ leaves: For this, we apply Lemma \ref{lemma1bst} with $a =\frac{1}{\ln 4}$, $\varepsilon< \frac{1}{3}$ and $\mathcal{S}_k = \mathcal{T}_k$, so that $p_k=1$ for $k_0 \leq k \leq n^{\varepsilon}$. Thus, both in expectation and with high probability, as the estimate from Lemma \ref{lemma1bst} (part (4)) holds with high probability simultaneously for all $k$ in the given range, we obtain:
\begin{align*}
\left(\sum_{k_0 \leq k \leq n^{\varepsilon}} X_{n,k}\right) + Y_{n,\varepsilon} &= \sum_{k_0 \leq k \leq n^{\varepsilon}} \frac{2n}{k(k+1)}(1+O(kn^{-1/2+\varepsilon})) + O(n^{1-\frac{\varepsilon}{2}}) \\ &= 2\ln 4  \cdot \frac{n}{\ln n }(1+o(1)).
\end{align*}
Hence, we find that $J_n$ is both in expectation and with high probability bounded from above by
\begin{align*}
J_n \leq \sum_{k < k_0} C_k +\left( \sum_{k_0 \leq k \leq n^{\varepsilon}} X_{n,k}\right) + Y_{n,\varepsilon} =2 \ln 4 \cdot \frac{n}{\ln n} (1+o(1)).
\end{align*}
The numerical value of the constant is $c_6=2\ln(4) \approx 2.7725887222$.

\emph{The lower bound:} As a consequence of Theorem \ref{thm:cltPbst}, the probability that $P_{\bst}(T_k)^{-1} \leq 2^{ \mu k -k^{3/4}}$ for a random binary search tree $T_k$ of size $k$ tends to $0$ for $k \rightarrow \infty$. Let $\mathcal{S}_k$ denote the set of binary trees of size $k$ that do not satisfy this inequality: Thus, every binary tree $t \in \mathcal{S}_k$ satisfies $P_{\bst}(t) \leq 2^{- \mu k + k^{3/4}}$ and we have $p_k = 1+o(1)$.
In order to prove the lower bound, we only consider fringe subtrees in $\mathcal{S}_k$ for suitable $k$: Thus, we can suitably upper-bound the probability that two fringe subtrees of size $k$ in a random binary search tree are identical. Let  $\delta$ denote a positive constant with $\delta < \frac23$, let $k_1 = (1+\delta)\mu^{-1}\log_2 n$ and let $k_1 \leq k \leq n^{\delta/2}$.
By Lemma \ref{lemma1bst}, the number of such fringe subtrees in a random binary search tree with $n$ leaves is $2np_k/(k(k+1))$ in expectation and
\begin{align*}
\frac{2np_k}{k(k+1)}(1+O(p_k^{-1/2}n^{(\delta-1)/2}k))
\end{align*}
with high probability. Furthermore, let $X^{(2)}_{n,k}$ denote the (random) number of pairs of identical fringe subtrees among the fringe subtrees with $k$ leaves that belong to $\mathcal{S}_k$, for $k_1 \leq k \leq n^{\delta/2}$. Let us condition on the event that $X_{n,k}=N$ for some nonnegative integer $N \leq n$. Those $N$ fringe subtrees are all independent random binary search trees, and the probability that such a fringe subtree equals a given binary tree $t \in \mathcal{S}_k$ is $P_{\bst}(t)/p_k$. Thus, we have
\begin{align*}
\Ex(X_{n,k}^{(2)} \mid X_{n,k}=N) = \binom{N}{2} \sum_{t \in \mathcal{S}_k} \left(\frac{P_{\bst}(t)}{p_k}\right)^2 \leq \frac{n^2}{2}\frac{1}{p_k^2}\sum_{t \in \mathcal{S}_k}P_{\bst}(t)^2.
\end{align*}
As by assumption, $P_{\bst}(t) \leq 2^{-\mu k + k^{3/4}}$ for every $t \in \mathcal{S}_k$, the expected value is upper-bounded by
\begin{align*}
\Ex(X_{n,k}^{(2)} \mid X_{n,k}=N) \leq \frac{n^2}{2}\frac{2^{-\mu k +k^{3/4}}}{p_k^2}\sum_{t \in \mathcal{S}_k}P_{\bst}(t) = \frac{n^2}{p_k} 2^{-\mu k-1 +k^{3/4}}.
\end{align*}
Since this upper bound for $\Ex(X_{n,k}^{(2)} \mid X_{n,k}=N)$ holds independently of $N$, the law of total expectation yields
\begin{align*}
\Ex(X_{n,k}^{(2)}) = \sum_{N=0}^n\Ex(X_{n,k}^{(2)} \mid X_{n,k} = N)\mathbb{P}(X_{n,k} = N) \leq \frac{n^2}{p_k} 2^{-\mu k-1 +k^{3/4}}.
\end{align*}
With $p_k = 1+o(1)$, we obtain
\begin{align*}
\Ex(X_{n,k}^{(2)}) \leq n^2 2^{-\mu k-1 +k^{3/4}}(1+o(1)) =  n^2 2^{-\mu k +O(k^{3/4})}.
\end{align*}
As $k \geq k_1= \frac{1+\delta}{\mu} \log_2 n$, we find
\begin{align*}
\Ex(X_{n,k}^{(2)}) \leq n^2 2^{-(1+\delta)\log_2 n +O(k^{3/4})} \leq n^{1-\delta} 2^{O((\log_2 n)^{3/4})}.
\end{align*}
Thus,
\begin{align*}
\sum_{k_1 \leq k \leq n^{\delta/2}}\Ex(X_{n,k}^{(2)}) \leq n^{1-\delta/2}2^{O((\log_2 n)^{3/4})} = o(n/\ln n).
\end{align*}

The (random) number $J_n$ of distinct fringe subtrees in a random binary search tree of size $n$ is lower-bounded by the number of distinct fringe subtrees of sizes $k$ for $k_1 \leq k \leq n^{\delta/2}$ that belong to $\mathcal{S}_k$, and this number is again lower-bounded by the sum over $X_{n,k}-X_{n,k}^{(2)}$ for $k_1 \leq k \leq n^{\delta/2}$. 
We thus have
\begin{align*}
J_n \geq \sum_{k_1 \leq k \leq n^{\delta/2}} X_{n,k} - \sum_{k_1 \leq k \leq n^{\delta/2}}X_{n,k}^{(2)}.
\end{align*}
The second sum is $o(n/\ln n)$ in expectation and hence by Markov's inequality with high probability as well. The first sum can be estimated using Lemma \ref{lemma1bst}, as in the proof of the upper bound, which yields
\begin{align*}
\sum_{k_1 \leq k \leq n^{\delta/2}} X_{n,k} = \frac{2 \mu \ln 2}{1+\delta} \cdot \frac{n}{\ln n}(1+o(1)),
\end{align*}
in expectation and with high probability.
Since $\delta$ can be chosen arbitrarily, the desired statement holds for any constant 
\begin{align*}
c_5 <2 \mu \ln 2 \approx 2.4071298335.
\end{align*}
\flushright{\qedsymbol}
\end{proof}

\subsection{Unordered Fringe Subtrees in Random Binary Search Trees}
It remains to prove Theorem \ref{thm:unorderedbst}:

\begin{proof}[Proof of Theorem \ref{thm:unorderedbst}]
\emph{The upper bound}: The proof for the upper bound exactly matches the first part of the proof of Theorem \ref{thm:orderedbst}, except that we choose the cut-point $k_0 = \log_b(n)$, where $b \approx 2.4832535362$ is the constant determining the asymptotic growth of the Wedderburn-Etherington numbers.

\emph{The lower bound}:
Let $T_k$ denote a random binary search tree with $k$ leaves.
As a consequence of Theorem \ref{thm:cltbst}, the probability that $\abs{\Aut(T_k)}=A_k \leq 2^{\nu k - k^{3/4}}$ tends to $0$ as $k \rightarrow \infty$. Moreover, by Theorem \ref{thm:cltPbst} the probability that $B_k  = P_{\bst}(T_k)^{-1} \leq 2^{\mu k -k^{3/4}}$ tends to $0$ for $k \rightarrow \infty$. Let $\mathcal{S}_k$ denote the set of ordered binary trees with $k$ leaves for which neither of the two inequalities is satisfied: Thus, every binary tree $t \in \mathcal{S}_k$ satisfies $P_{\bst}(t) \leq 2^{- \mu k +k^{3/4}}$ and $\abs{\Aut (t)} \geq 2^{\nu k -k^{3/4}}$, and we have $p_k = 1+o(1)$. By the orbit-stabilizer theorem, we find that the number of ordered binary trees in the same isomorphism class as a tree $t \in \mathcal{S}_k$ is bounded from above by
\begin{align*}
\frac{2^{k-1}}{\abs{\Aut (t)}} \leq \frac{2^{k-1}}{2^{\nu k -k^{3/4}}} = 2^{(1- \nu) k -1+ k^{3/4}}.
\end{align*}

In order to prove the lower bound, we only consider fringe subtrees in $\mathcal{S}_k$ for suitable $k$: Thus, we are able to suitably upper-bound the probability that two fringe subtrees of size $k$ in a random binary search tree are identical as unordered binary trees. 
Let $\delta < \frac{2}{3}$, let $k_1 = (1 + \delta)\log_2 n/(\mu + \nu -1)$ and let $k_1 \leq k \leq n^{\delta/2}$. 
By Lemma \ref{lemma1bst}, the number of fringe subtrees of size $k$ with $k_1 \leq k \leq n^{\delta/2}$ is $2np_k/(k(k+1))$ in expectation and 
\begin{align*}
\frac{2np_k}{k(k+1)}(1+O(p_k^{-1/2}n^{(\delta-1)/2}k))
\end{align*}
with high probability.
For $k_1 \leq k \leq n^{\delta/2}$, let $X_{n,k}^{(2)}$ denote the (random) number of pairs of isomorphic binary trees among the fringe subtrees of size $k$ that belong to $\mathcal{S}_k$. Moreover, let $l$ denote the number of isomorphism classes of binary trees in $\mathcal{S}_k$ and for each isomorphism class, pick one representative: Let $t_1, t_2, \dots, t_l$ denote those representatives. If a binary tree $t$ is in the same isomorphism class as tree $t_i$, then $P_{\bst}(t) = P_{\bst}(t_i)$ and $\abs{\Aut (t)} =\abs{\Aut (t_i)}$. In particular, if $t \in \mathcal{T}_k$ is isomorphic to a representative $t_i$, then $t \in \mathcal{S}_k$ as well, that is, all binary trees that are isomorphic to a binary tree in $\mathcal{S}_k$ are automatically contained in $\mathcal{S}_k$ as well.
As there are $2^{k-1}/\abs{\Aut t_i}$ many trees in the same isomorphism class as the binary tree $t_i$, we find
\begin{align*}
\sum_{i=1}	^l P_{\bst}(t_i)\frac{2^{k-1}}{\abs{\Aut t_i}} = \sum_{t \in \mathcal{S}_k} P_{\bst}(t) =p_k.
\end{align*}
Let us condition on the event that $X_{n,k}=N$ for some integer $0 \leq N \leq n$. Those $N$ fringe subtrees are all independent random binary search trees, and the probability that such a fringe subtree is isomorphic to a given binary tree $t_i \in \mathcal{S}_k$ is $(P_{\bst}(t_i)/p_k) \cdot (2^{k-1}/\abs{\Aut (t_i)})$. Thus, we find
\begin{align*}
\Ex(X_{n,k}^{(2)} \mid X_{n,k} = N) \!= \!\binom{N}{2} \sum_{i=1}^l\left(\frac{2^{k-1}P_{\bst }(t_i)}{\abs{\Aut t_i}p_k}\right)^2 \!\leq \frac{n^2}{2}\frac{1}{p_k^2}\sum_{i=1}^l\left(\frac{2^{k-1}P_{\bst}(t_i)}{\abs{\Aut t_i}}\right)^2\!.
\end{align*}
As $t_1, \dots, t_l \in \mathcal{S}_k$, we have $P_{\bst}(t_i)\leq 2^{-\mu k + k^{3/4}}$ and thus $$2^{k-1}/\abs{\Aut t_i} \leq 2^{(1-\nu)k -1+k^{3/4}}$$ for every $i \in \{1, 2, \dots, l\}$. Hence
\begin{align*}
\Ex(X_{n,k}^{(2)} \mid X_{n,k} = N) &\leq \frac{n^2 2^{(1-\nu - \mu)k + 2k^{3/4}-2}}{p_k^2}\sum_{i=1}^l \left(\frac{2^{k-1}P_{\bst}(t_i)}{\abs{\Aut t_i}}\right) \\&= p_k^{-1}n^2 2^{(1-\nu - \mu)k + 2k^{3/4}-2}.
\end{align*}
As this upper bound on the expectation is independent of $N$, we find by the law of total expectation:
\begin{align*}
\Ex(X_{n,k}^{(2)}) \leq p_k^{-1}n^2 2^{(1-\nu - \mu)k + 2k^{3/4}-2} = n^2 2^{-(\nu + \mu-1)k + 2k^{3/4}-2}(1+o(1)).
\end{align*}
With $k \geq k_1 = (1+\delta)/(\mu + \nu -1) \log_2 n$, we obtain
\begin{align*}
\Ex(X_{n,k}^{(2)}) \leq n^2 2^{-(1+\delta) \log_2 n + O((\log_2 n)^{3/4})} = n^{1-\delta} 2^{O((\log_2 n)^{3/4})}.
\end{align*}
Thus,
\begin{align*}
\sum_{k_1 \leq k \leq n^{\delta/2}} \Ex(X_{n,k}^{(2)}) \leq n^{1 - \frac{\delta}{2}}2^{O((\log_2 n)^{3/4})} = o(n/\ln n).
\end{align*}
Analogously as in the previous proofs, we lower-bound the random number $G_n$ of non-isomorphic fringe subtrees in a random binary search tree of size $n$ by the number of such fringe subtrees of sizes $k$ for $k_1 \leq k \leq n^{\delta/2}$ that belong to $\mathcal{S}_k$, and this number is again lower-bounded by the sum over $X_{n,k}-X_{n,k}^{(2)}$ for $k_1 \leq k \leq n^{\delta/2}$
by the inclusion-exclusion principle. We thus have
\begin{align*}
G_n \geq \sum_{k_1 \leq k \leq n^{\delta/2}} X_{n,k} - \sum_{k_1 \leq k \leq n^{\delta/2}} X_{n,k}^{(2)}.
\end{align*}
The second sum is $o(n/\ln n)$ in expectation and hence by Markov's inequality with high probability as well. The first sum is bounded similarly as in the estimate for the upper bound, which yields
\begin{align*}
\sum_{k_1 \leq k \leq n^{\delta/2}} X_{n,k} = \frac{2 (\mu + \nu -1)\ln 2}{1+\delta} \cdot \frac{n}{\ln n}(1+o(1)).
\end{align*}
Since $\delta$ can again be chosen arbitrarily, the desired statement holds for any constant
$
c_3 < 2(\mu + \nu -1)\ln 2 \approx 1.5470025923.$ \hfill \qedsymbol

\end{proof}

\section{Open Problems}

The following natural question arises from our results: Is it possible to determine constants $\alpha_1, \alpha_2, \alpha_3$ with $c_1 \leq \alpha_1 \leq c_2$, $c_3 \leq \alpha_2 \leq c_4$ and $c_5 \leq \alpha_3 \leq c_6$, such that
\begin{align*}
\Ex(F_n) =  \frac{\alpha_1 n}{\sqrt{\log n}}(1+o(1)), \  \Ex(G_n) =  \frac{\alpha_2 n}{\log n}(1+o(1)), \ \Ex(J_n) =  \frac{\alpha_3 n}{\log n}(1+o(1)),
\end{align*}
respectively, and
\begin{align*}
\frac{F_n}{n/\sqrt{\log n}} \overset{P}{\to} \alpha_1, \
\frac{G_n}{n/\log n} \overset{P}{\to} \alpha_2, \ \text{and} \
\frac{J_n}{n/\log n} \overset{P}{\to} \alpha_3\  ?
\end{align*}
In order to prove such estimates, it seems essential to gain a better understanding of the random variables $P_{\bst}(T_k)^{-1}$ and $\abs{\Aut(T_k)}$, in particular their distributions further away from the mean values, for random binary search trees or uniformly random ordered binary trees $T_k$ of size $k$.
\bibliographystyle{plain}
\bibliography{bib}

\begin{thebibliography}{10}

\bibitem{AbiteboulBV15}
Serge Abiteboul, Pierre Bourhis, and Victor Vianu.
\newblock Highly expressive query languages for unordered data trees.
\newblock {\em Theory of Computing Systems}, 57(4):927--966, 2015.

\bibitem{AhoSU86}
Alfred~V. Aho, Ravi Sethi, and Jeffrey~D. Ullman.
\newblock {\em Compilers: Principles, Techniques, and Tools}.
\newblock Addison-Wesley series in computer science / World student series
  edition. Addison-Wesley, 1986.

\bibitem{aldous91}
David Aldous.
\newblock Asymptotic fringe distributions for general families of random trees.
\newblock {\em The Annals of Applied Probability}, 1(2):228--266, 1991.

\bibitem{bonaf09}
Mikl{\'o}s B{\'o}na and Philippe Flajolet.
\newblock Isomorphism and symmetries in random phylogenetic trees.
\newblock {\em Journal of Applied Probability}, 46(4):1005--1019, 2009.

\bibitem{BonevaCS15}
Iovka Boneva, Radu Ciucanu, and Slawek Staworko.
\newblock Schemas for unordered {XML} on a {DIME}.
\newblock {\em Theory of Compuing Systems}, 57(2):337--376, 2015.

\bibitem{MLMN13}
Mireille Bousquet-M{\'e}lou, Markus Lohrey, Sebastian Maneth, and Eric Noeth.
\newblock {XML} compression via {DAG}s.
\newblock {\em Theory of Computing Systems}, 57(4):1322--1371, 2015.

\bibitem{Bry92}
Randal~E. Bryant.
\newblock Symbolic boolean manipulation with ordered binary-decision diagrams.
\newblock {\em ACM Computing Surveys}, 24(3):293--318, 1992.

\bibitem{BuGrKo03}
Peter Buneman, Martin Grohe, and Christoph Koch.
\newblock Path queries on compressed {XML}.
\newblock In Johann~Christoph Freytag et~al., editors, {\em Proceedings of the
  29th Conference on Very Large Data Bases, VLDB 2003}, pages 141--152. Morgan
  Kaufmann, 2003.

\bibitem{dennertgr10}
Florian Dennert and Rudolf Gr{\"u}bel.
\newblock On the subtree size profile of binary search trees.
\newblock {\em Combinatorics, Probability and Computing}, 19(4):561--578, 2010.

\bibitem{Devroye98}
Luc Devroye.
\newblock On the richness of the collection of subtrees in random binary search
  trees.
\newblock {\em Information Processing Letters}, 65(4):195--199, 1998.

\bibitem{devroye14}
Luc Devroye and Svante Janson.
\newblock Protected nodes and fringe subtrees in some random trees.
\newblock {\em Electronic Communications in Probability}, 19:1--10, 2014.

\bibitem{Drmota09}
Michael Drmota.
\newblock {\em Random Trees: An Interplay Between Combinatorics and
  Probability}.
\newblock Springer Publishing Company, Incorporated, 1st edition, 2009.

\bibitem{FengM10}
Qunqiang Feng and Hosam~M. Mahmoud.
\newblock On the variety of shapes on the fringe of a random recursive tree.
\newblock {\em Journal of Applied Probability}, 47(1):191--200, 2010.

\bibitem{FengMP08}
Qunqiang Feng, Hosam~M. Mahmoud, and Alois Panholzer.
\newblock Phase changes in subtree varieties in random recursive and binary
  search trees.
\newblock {\em {SIAM} Journal on Discrete Mathematics}, 22(1):160--184, 2008.

\bibitem{Fill96}
James~Allen Fill.
\newblock On the distribution of binary search trees under the random
  permutation model.
\newblock {\em Random Structures \& Algorithms}, 8(1):1--25, 1996.

\bibitem{finch03}
Steven~R. Finch and Gian-Carlo Rota.
\newblock {\em Mathematical Constants}.
\newblock Encyclopedia of Mathematics and its Applications. Cambridge
  University Press, 2003.

\bibitem{FlajoletGM97}
Philippe Flajolet, Xavier Gourdon, and Conrado Mart{\'{i}}nez.
\newblock Patterns in random binary search trees.
\newblock {\em Random Structures \& Algorithms}, 11(3):223--244, 1997.

\bibitem{FlajoletS09}
Philippe Flajolet and Robert Sedgewick.
\newblock {\em Analytic Combinatorics}.
\newblock Cambridge University Press, 2009.

\bibitem{FlajoletSS90}
Philippe Flajolet, Paolo Sipala, and Jean-Marc Steyaert.
\newblock Analytic variations on the common subexpression problem.
\newblock In {\em Proceedings of the 17th International Colloquium on Automata,
  Languages and Programming, ICALP 1990}, volume 443 of {\em Lecture Notes in
  Computer Science}, pages 220--234. Springer, 1990.

\bibitem{FrGrKo03}
Markus Frick, Martin Grohe, and Christoph Koch.
\newblock Query evaluation on compressed trees (extended abstract).
\newblock In {\em Proceedings of the 18th {A}nnual {IEEE} {S}ymposium on
  {L}ogic in {C}omputer {S}cience, LICS 2003}, pages 188--197. {IEEE}
  {C}omputer {S}ociety {P}ress, 2003.

\bibitem{GanardiHLS19}
Moses Ganardi, Danny Hucke, Markus Lohrey, and Louisa~Seelbach Benkner.
\newblock Universal tree source coding using grammar-based compression.
\newblock {\em {IEEE} Transactions on Information Theory}, 65(10):6399--6413,
  2019.

\bibitem{holmgrenj15}
Cecilia Holmgren and Svante Janson.
\newblock Limit laws for functions of fringe trees for binary search trees and
  random recursive trees.
\newblock {\em Electronic Journal of Probability}, 20:1--51, 2015.

\bibitem{KiefferYS09}
John~C. Kieffer, En-Hui Yang, and Wojciech Szpankowski.
\newblock Structural complexity of random binary trees.
\newblock In {\em Proceedings of the 2009 IEEE International Symposium on
  Information Theory, {ISIT} 2009}, pages 635--639. IEEE, 2009.

\bibitem{LohreyMR17}
Markus Lohrey, Sebastian Maneth, and Carl~Philipp Reh.
\newblock Compression of unordered {XML} trees.
\newblock In {\em 20th International Conference on Database Theory, {ICDT}
  2017, March 21-24, 2017, Venice, Italy}, pages 18:1--18:17, 2017.

\bibitem{PatersonW78}
Mike Paterson and Mark~N. Wegman.
\newblock Linear unification.
\newblock {\em Journal of Computer and System Sciences}, 16(2):158--167, 1978.

\bibitem{RalaivaosaonaW15}
Dimbinaina Ralaivaosaona and Stephan~G. Wagner.
\newblock Repeated fringe subtrees in random rooted trees.
\newblock In {\em Proceedings of the Twelfth Workshop on Analytic Algorithmics
  and Combinatorics, {ANALCO} 2015}, pages 78--88. {SIAM}, 2015.

\bibitem{SeelbachLo18}
Louisa {Seelbach Benkner} and Markus Lohrey.
\newblock Average case analysis of leaf-centric binary tree sources.
\newblock In {\em 43rd International Symposium on Mathematical Foundations of
  Computer Science, {MFCS} 2018, August 27-31, 2018, Liverpool, {UK}}, pages
  16:1--16:15, 2018.

\bibitem{ZhangYK14}
Jie Zhang, En-Hui Yang, and John~C. Kieffer.
\newblock A universal grammar-based code for lossless compression of binary
  trees.
\newblock {\em IEEE Transactions on Information Theory}, 60(3):1373--1386,
  2014.

\bibitem{ZhangDW15}
Sen Zhang, Zhihui Du, and Jason~Tsong{-}Li Wang.
\newblock New techniques for mining frequent patterns in unordered trees.
\newblock {\em {IEEE} Transactions on Cybernetics}, 45(6):1113--1125, 2015.

\end{thebibliography}

\end{document}